\newtheorem{theorem}{Theorem}
\newtheorem{lemma}[theorem]{Lemma}
\newtheorem{prop}[theorem]{Proposition}
\numberwithin{equation}{section}
\numberwithin{theorem}{section}
\newcommand{\Z}{\mathbb{Z}}
\newcommand{\U}{U}
\newcommand{\X}{Z}
\newcommand{\Fq}{\mathbb{F}_q}
\newcommand{\Fqn}{\mathbb{F}_{q^n}}
\newcommand{\Fqp}{\mathbb{F}_{q^n}}
\newcommand{\Fqm}{\mathbb{F}_{q^m}}
\newcommand{\Fqb}{\overline{\mathbb{F}}_q}
\newcommand{\Bf}[1]{\boldsymbol{#1}}
\newcommand{\mb}{\Bf {m}}
\newcommand{\f}{f}
\newcommand{\F}{F}
\newcommand{\qf}{\mathfrak{q}}
\def\cO{{\mathcal O}}
\def\cR{{\mathcal R}}
\def\cS{{\mathcal S}}
\title[HEC Koblitz]{Linear complexity of sequences on Koblitz curves of genus 2}
\author[V.~Anupindi]{Vishnupriya Anupindi}
\address{Johann Radon Institute for Computational and Applied Mathematics, Austrian Academy of Sciences,  Altenberger Stra\ss e 69, A-4040 Linz, Austria} 
\email{vishnupriya.anupindi@oeaw.ac.at}
\begin{document}
\keywords{elliptic curve, hyperelliptic curve, linear complexity, frobenius generator}

\subjclass[2020]{11G05, 11G20, 11K45, 11T71 }
\maketitle

\begin{abstract}
	In this paper, we consider the hyperelliptic analogue of the Frobenius endomorphism generator and show that it produces sequences with large linear complexity on the Jacobian of genus $ 2 $ curves.
\end{abstract}

\section{Introduction}
An important operation in elliptic curve based cryptosystems is to compute scalar multiples of a given group element. The standard method for computing scalar multiples is the \textit{double-and-add-method}, but faster methods have been suggested by using the Frobenius endomorphism on special curves known as Koblitz curves, see \cite{Kob_CM_MR1243654, Muller_MR1648949, Smart_MR1685175, Solinas_MR1759617}. The ideas for fast computation of scalar multiples on elliptic Koblitz curves have been generalized to hyperelliptic curves of genus $2$, see \cite{Lange_Stein_MR1895585}. 

In \cite{ShparlinskiLange05}, Lange and Shparlinski investigated the problem of choosing random elements from elliptic and hyperelliptic curves, see also \cite{lange2007distribution, LM_EC_FEG_MR3649088}. One can choose such elements by computing random scalar multiples of an initial element fixed in advance. However, Lange and Shparlinski \cite{ShparlinskiLange05}, by taking advantage of fast computation of scalar multiplication on Koblitz curves, introduced a more efficient and direct way to obtain random-looking elements, called \emph{Frobenius endomorphism generator}. 

In this paper, we study some properties of pseudorandomness of sequences derived from hyperelliptic curves of genus $ 2 $ using the Frobenius endomorphism generator. In particular, we investigate the level of randomness of such sequences in terms of linear complexity.
We recall, that the \emph{linear complexity} of a sequence $(s_n)$ of length $N$ over the finite field $\Fq$ is defined as the smallest non-negative integer $L$ such that the first $ N $ terms of the sequence $ (s_n) $ can be generated by a linear recurrence relation over $ \Fq $ of order $ L $, i.e.\ there exist $ c_0, c_1, \dots , c_{L-1} \in \Fq $ such that 
\begin{equation*}
s_{n+L} = c_0s_{n} + c_1s_{n+1} + \dots  + c_{L-1}s_{n+L-1},\quad  0 \leq n \leq N-L-1.
\end{equation*}
The linear complexity measures the unpredictability of a sequence, hence for applications in cryptography, a large linear complexity is desired. However, a large linear complexity is not a sufficient condition for the unpredictability of a sequence. For more details, see \cite{MeidlWinterhof,Nied1, Winterhof2010}. 

In \cref{sec:HEC}, we recall some properties of hyperelliptic curves and in \cref{sec:Kob_Curve}, we define the Frobenius endomorphism generator and state the main result. In \cref{sec:prep}, we collect auxiliary results which are used in the proof. In particular, we recall the Grant representation \cite{grant1990formal} of the Jacobian of a hyperelliptic curve of genus $ 2 $ and some results from \cite{anupindiLinearComplexitySequences2021}. Finally, in \cref{sec:proof}, we prove the main result. 

\section{Hyperelliptic curves}\label{sec:HEC}
Let $ \Fq $ be a finite field with characteristic $ p \geq 3 $ and $ \Fqp $ be an extension field of $ \Fq $ with $ n \geq 1 $. Let $\Fqb$ be the algebraic closure of $\Fq$. 
\subsection{Points on hyperelliptic curves}
Let $ C $ be a hyperelliptic curve of genus $ g \geq 1 $ defined over the base field $ \Fq $ by
\begin{equation}\label{def:hec}
C: Y^2 = h(X),
\end{equation}
where $ h(X) \in \Fq[X]$ is a polynomial of degree $ 2g+1 $.  For details on hyperelliptic curves, see \cite{cohen2005handbook,Galbraith-book, Koblitz-book}.
We denote the $\Fqp$-rational points of $C$ by $C(\Fqp)$, which are the solutions over $\Fqp$ of the defining equation \eqref{def:hec} together with a point $\cO$ at infinity. 
By the Hasse-Weil bound \cite[Theorem~5.2.3]{stichtenothAlgebraicFunctionFields2009}, we have
\begin{equation}\label{eq:number_of_points}
\, |\, |C(\Fqp)| -(q^n+1) |\leq 2g q^{n/2}.
\end{equation} 
%The additive group associated to curve $ C $ that we use in cryptography is the \emph{Jacobian} $J_C$ of the curve $C$, which is an abelian variety of dimension $ g $ \cite[Theorem~A.8.1.1]{hindryDiophantineGeometryIntroduction2000}. For elliptic curves (curves with genus $ g=1 $), the Jacobian $ J_C $ is isomorphic to the curve $ C $, but this is not the case for genus $ g \geq 2 $.  For higher genus hyperelliptic curves $ (g \geq 2) $, we can describe elements of the Jacobian $ J_C $ in the following way. 
\subsection{Jacobian of hyperelliptic curves}
For an affine point $P=(x,y)\in C$, we write $-P=(x,-y)$ and $-\cO=\cO$ for the point at infinity. A \emph{divisor} $D$ of $C$ is an element of the free abelian group over the points of $C$, e.\ g.\ $D =\sum_{P\in C } n_P P$  with $n_P\in \mathbb{Z}$ and $n_P = 0$ for almost all points $P$. A \emph{reduced divisor} is given by 
\begin{equation}\label{eq:reduced_div}
D = P_1+\dots +P_r-r\cO,
\end{equation}
where $1\leq r \leq g$, $P_1, \dots, P_r\in C$, $P_i\neq \cO $ for $ 1\leq i \leq r$ and $P_i\neq -P_j$ for $1\leq i<j\leq r$. 

The \emph{Jacobian} $J_C$ of the curve $C$ is the set of reduced divisors. One can define an addition operation on the set of reduced divisors, denoted by $+$, with the identity element $\cO$, which makes $J_C$ into a group.
The elements of the curve $C(\Fq)$ are represented in the Jacobian by the set
\begin{equation}\label{eq:Theta}
\Theta(\Fq)=\{D\in J_C(\Fq): D=P-\cO, P\in C(\Fq) \} \cup \{\cO\} .
\end{equation}
We also write $\Theta=\Theta(\Fqb)$.

The Frobenius endomorphism $ \sigma: \Fqb \rightarrow \Fqb, x \mapsto x^q $, extends naturally to points on $ C $, where $ \sigma((x,y)) = (x^q,y^q)  $ and $ \sigma(\cO) = \cO $. For $ D = \sum_{i=1}^{r} P_i - r \cO  \in J_C $, define $ \sigma(D) = \sum_{i=1}^{r} \sigma(P_i) - r \cO $. An element $D \in J_C$ as given in \eqref{eq:reduced_div} is said to be defined over $\Fq$ if $ \sigma(D) $ permutes the set $\{P_1,\dots, P_r\}$. We use $J_C(\Fq)$ to denote the set of elements of $J_C$ which are defined over $\Fq$. 

The characteristic polynomial, $ \chi_C (T) $ of the Frobenius endomorphism $ \sigma $ is a degree $ 2g $ polynomial with integer coefficients of the following form
\begin{equation}\label{eq:char_poly_Frob}
\chi_C (T) = T^{2g} + s_1T^{2g-1} + \dots + s_gT^g + \dots + s_1q^{g-1}T + q^g, s_i \in \Z.
\end{equation}
It follows from the Hasse-Weil Theorem \cite[Theorem 5.1.15 and 5.2.1]{stichtenothAlgebraicFunctionFields2009}, that the complex roots $ \tau_i $ of $ \chi_C $ have absolute value $ |\tau_i| = q^{1/2}, i = 1, \dots, 2g $.  For any extension degree $ n $, the cardinality of $ J_C(\Fqn) $  is given by 
\begin{equation}\label{eq:card_Jac_roots}
|J_{C}(\Fqn)| = \prod_{i=1}^{2g} (1-\tau_i ^n).
\end{equation}
In particular, we have
\begin{equation}\label{eq:size_of_jac}
(q^{n/2}-1)^{2g}\leq |J_{C}(\Fqn)|\leq (q^{n/2}+1)^{2g}, ~  n\geq 1.
\end{equation}
\subsection{Mumford representation}
A compact representation of elements of the Jacobian $ J_C $ is given by the \emph{Mumford representation} \cite{mumfordTataLecturesTheta2007a} using a pair of polynomials $ [u,v] \in \Fq[X] \times \Fq[X]$. For a reduced divisor $ D = \sum_{i=1}^{r} P_i - r \cO $ with $ P_i = (x_i,y_i) $ the Mumford representation is given by $ u =\prod_{i=1}^r(X-x_i)$ and $v$ such that $v$ interpolates the points $ P_i $ respecting multiplicities. In particular,
\renewcommand{\theenumi}{\alph{enumi}}
\begin{enumerate}
	\item \label{mum-a} $u$ is monic,
	\item $u$ divides $f-v^2$,
	\item $\deg(v)< \deg(u)\leq g$.
\end{enumerate}
%$ u $ is monic, $ v $ is either $ 0 $ or $ \deg(v) < \deg(u) \leq g$ and $ u $ divides $ f - v^2 $. 
For genus $ g=2 $, a generic element $ D = P_1 + P_2 - 2\cO $ is represented by the polynomials 
\begin{equation}\label{eq:Mumford_rep}
u = x^2 + u_1x + u_0,~ v= v_1x + v_0,~ u_i, v_i \in \Fqn, i \in \{0,1\} \text{ such that } D=[u,v].
\end{equation} 

\section{Koblitz curves and fast generation of elements in Jacobian} \label{sec:Kob_Curve}
By a hyperelliptic \emph{Koblitz curve}, we refer to a hyperelliptic curve that is defined over a small finite field and is considered over a large extension field. In this work, we avoid fields with characteristic $ 2 $ for technical reasons. For Koblitz curves, it is recommended to choose base fields $ q \leq 7 $ for computational advantage, see \cite{lange2005koblitz}. However, we do not impose this restriction for our result. 
%In this work, we choose $ 3 \leq q \leq 7 $. Typically, we choose $ \Fq, \old{q \leq 7} $ and $ \Fqn $ with $ n $ large, \warn{i.e. $ n \sim 10^m, m\geq 2 $.} For more details, see \cite{lange2005koblitz}. 

For fast generation of elements in the Jacobian $ J_C(\Fqn) $, Lange and Shparlinski in \cite{ShparlinskiLange05} introduced the following method using the Frobenius endomorphism. Here we restrict ourselves to the genus $ 2 $ case.
Let 
\begin{equation*}
\cR = \{ 0, \pm 1, \dots, \pm (q^2 -1)/2 \}
\end{equation*}
represent the set $\Z /q^2\Z$. Let $D \in J_C(\Fqn)$ be an element of order $\ell$. For fixed $k \leq n$, consider the element of $J_C$ defined as follows:
\begin{equation}\label{eq:Dm_def}
D_{\mb} = \sum_{j=0}^{k-1} m_j \sigma^{j}(D), \quad \mb = (m_0, \dots,m_{k-1}) \in \cR^k. 
\end{equation}
%In the above construction, collisions can occur, that is, different multi-indices can yield the same element. However in \cite{ShparlinskiLange05}, the authors started to investigate the randomness properties of $ D_{\mb} $. In particular, they showed that $ D_{\mb} $ is well distributed, namely, there are few collisions. For elliptic curves, the randomness properties of the points $  D_{\mb} $ were investigated in \cite{lange2007distribution} and \cite{LM_EC_FEG_MR3649088}. \warn{More content to be added...}
It is natural to expect that the divisors $ D_{\mb} $ defined by \eqref{eq:Dm_def} are sufficiently uniformly distributed. Lange and Shparlinski \cite{ShparlinskiLange05} showed that $ D_{\mb} $ do not take the same value too often (which would otherwise have catastrophic implications for their cryptographic applications).

In this paper we further investigate the randomness properties of $ D_{\mb} $. Namely, we show that different statistics of the divisors $ D_{\mb} $, like the Mumford coordinates $ u_i, v_i $ as in \eqref{eq:Mumford_rep}, possess large linear complexity if the divisors $ D_{\mb} $ are arranged in a natural way, say in lexicographic ordering. More precisely, let 
$\f \in \Fqp(J_C)$ be a rational function in the function field of the Jacobian.  We arrange the elements of $\cR^k$ with a lexicographic ordering and define the sequence $ (w_{\mb})_{\mb \in \cR^k} $ with
\begin{equation}\label{eq:seq_def}
w_{\mb} = \begin{cases}
f(D_{\mb}) &\text{if $D_{\mb}$ is not a pole of $f$,} \\
0  &\text{otherwise.}
\end{cases} 
\end{equation}
%We also define $ \mb + i $ as the $ i^{\text{th}} $ element after $ \mb $ with respect to the lexicographic ordering. 
Throughout the paper, $U \ll V $ is equivalent to the inequality $ |U| \leq cV $ with some constant $ c > 0 $.
Our main result is the following bound on the linear complexity of $ (w_{\mb})_{\mb \in \cR^k} $.
\begin{theorem}\label{thm:main_result}
	Let $ C $ be a hyperelliptic curve of genus $ 2 $, defined over the base field $ \Fq $ and let $J_C(\Fqn)$ be its Jacobian over the extension field $ \Fqn $. Let the characteristic polynomial of the Frobenius endomorphism $\chi_C$ be irreducible. Let $f \in \Fqn(J_C)$ be a rational function with pole divisor of the form $\alpha \Theta, \alpha \in \Z, \alpha \geq 1 $. If $D \in J_C(\Fqn)$ is of prime order $\ell, \ell \nmid q^2$, then for any $ k $ where $ 1 \leq k \leq n $ with $(w_{\mb})_{\mb \in \cR^k}$ as defined in \eqref{eq:seq_def}, we have
	\begin{equation}\label{eq:thm_main}
	L(w_{\mb}) \gg  \frac{\min\{ q^{3k/2},\ell/q^8 \}}{ q^n \deg \f }.
	\end{equation} 
\end{theorem}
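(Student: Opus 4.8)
The plan is to turn the hypothetical linear recurrence for $(w_{\mb})$ into a rational identity on $J_C$ and then contradict it by counting points. Suppose $L=L(w_{\mb})$ and write $w_{\mb+L}=\sum_{i=0}^{L-1}c_i w_{\mb+i}$, where $\mb+i$ denotes the $i$-th successor in the lexicographic order on $\cR^k$. The key observation is that as long as the successor merely increments the last coordinate (no carry), it acts on divisors as translation by the fixed point $P:=\sigma^{k-1}(D)$, which again has order $\ell$ since $\ell\nmid q^2$ forces $\ell\neq p$, so $\sigma$ is bijective on the $\ell$-torsion. Setting
\[
\Phi(x)=f(x+LP)-\sum_{i=0}^{L-1}c_i\,f(x+iP)\in\Fqn(J_C),
\]
the recurrence gives $\Phi(D_{\mb})=0$ for every index $\mb$ in the interior of a block. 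Discarding the $O(q^{2(k-1)})$ boundary indices, $\Phi$ then vanishes at the divisors $D_{\mb}$ of a large set $\mathcal{S}$, all of which are $\Fqn$-rational because $\sigma^{j}(D)\in J_C(\Fqn)$.

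Assuming first that $\Phi\not\equiv 0$, its zero locus $Z=\{\Phi=0\}$ is an honest curve on the abelian surface $J_C$, defined over $\Fqn$. Its polar divisor is contained in $\alpha\sum_{i=0}^{L}(\Theta-iP)$, so, using the self-intersection $\Theta\cdot\Theta=2$, its $\Theta$-degree satisfies $\Theta\cdot Z\le 2\alpha(L+1)\ll L\deg\f$. Applying the Weil bound to the components of $Z$ (equivalently, Lang--Weil) gives $\#Z(\Fqn)\ll(\Theta\cdot Z)\,q^{n}\ll L\,q^{n}\deg\f$. Since every distinct point of $\mathcal{S}$ lies on $Z$, this yields $L\gg\#\{\text{distinct }D_{\mb}\}/(q^{n}\deg\f)$, and the statement reduces to a lower bound for the number of distinct divisors $D_{\mb}$.

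That count is the substantive step. A coincidence $D_{\mb}=D_{\mb'}$ is equivalent to $a(\sigma)D=\cO$ for the integer polynomial $a(T)=\sum_j(m_j-m_j')T^{j}$ of degree $<k$ with coefficients in $(-q^2,q^2)$. Because $\chi_C$ is irreducible, $\Z[\sigma]$ is an order in the quartic field $\mathbb{Q}[T]/(\chi_C)$, and $a(\sigma)D=\cO$ means $a$ lies in the ideal cut out by the minimal polynomial of $\sigma$ on $\langle D\rangle$ modulo $\ell$; the admissible $a$ thus form a sublattice of $\Z^{k}$ of $\ell$-power index. Counting this sublattice inside the box $(-q^2,q^2)^{k}$ and feeding the result into the Cauchy--Schwarz bound $\#\{\text{distinct }D_{\mb}\}\ge q^{2k}/\#\{\text{short }a\colon a(\sigma)D=\cO\}$ produces the two regimes of the minimum: for large $\ell$ the only short relation is $a=0$, giving $\gg q^{3k/2}$ distinct values, while for smaller $\ell$ the subgroup $\langle D,\dots,\sigma^{k-1}(D)\rangle$ saturates and the box-versus-torsion comparison costs a factor $q^{8}=(q^2)^{4}$, the exponent $4$ being the dimension of $J_C[\ell]$, so one gets $\gg\ell/q^{8}$. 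The degenerate case $\Phi\equiv0$ is not a real obstruction: then the translates $f(x+iP)$, $0\le i\le\ell-1$, have the pairwise distinct irreducible polar divisors $\alpha(\Theta-iP)$ (distinct because the principal polarization has trivial stabiliser, so a $P$ of order $\ell>1$ cannot fix $\Theta$), hence are linearly independent over $\Fqn$, forcing $L\ge\ell$, which already exceeds the claimed bound.

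The main obstacle I expect is exactly this distinct-values count: carrying out the lattice-point estimate in $(-q^2,q^2)^{k}$ for the $\ell$-power-index sublattice, uniformly in how $\ell$ splits in $\Z[\sigma]$, is what pins down the precise exponents $q^{3k/2}$ and $\ell/q^{8}$, and it is the genus-$2$ refinement of the Lange--Shparlinski collision analysis. Two secondary technical points also need care: the implied constant in the Weil/Lang--Weil estimate for $Z$ must be controlled uniformly in the growing degree $\Theta\cdot Z$ (summing the genus contributions of the components), and the carry structure of the lexicographic order must be tracked so that the block-interior indices, to which the translation-by-$P$ interpretation applies, genuinely dominate the boundary terms.
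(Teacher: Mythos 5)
There is a genuine gap, and it is fatal to the approach as proposed: your use of the defining recurrence along \emph{consecutive} lexicographic indices caps the bound you could ever prove at roughly $q^2$, far below the theorem's target. Concretely, the translation-by-$P$ interpretation (with $P=\sigma^{k-1}(D)$) only applies to windows $\mb,\mb+1,\dots,\mb+L$ that contain no carry, i.e.\ windows lying inside a single block of fixed prefix $(m_0,\dots,m_{k-2})$. Each such block has length $|\cR|=q^2$, so the number of admissible starting indices per block is $q^2-L$, not $q^2-O(1)$: the boundary loss is $L\cdot q^{2(k-1)}$, not the $O(q^{2(k-1)})$ you claim, and it swallows the \emph{entire} index set as soon as $L\geq q^2$. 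Since the theorem asserts $L\gg q^{n/2}$ in the main range ($k=n$, $\ell\sim q^{2n}$, $\deg\f=1$), and one must argue under the hypothesis that $L$ is merely smaller than that bound, your vanishing set $\cS$ can be empty exactly in the regime that matters; the argument can only ever yield $L\gg\min\{q^2,\,\cdot\,\}$. The paper circumvents this with the stronger form of the linear-complexity lemma (\cref{lemma:linComp}), which allows \emph{arbitrary} distinct shifts $j_1,\dots,j_T$, not just $0,1,\dots,L$: it splits $\mb=(\Bf{\mu},\Bf{\nu})$ with $r=\max\{\lfloor k/4\rfloor,1\}$, takes the shifts to be the starting positions of $L+1$ blocks whose prefix-divisors $D_{\Bf{d_0}},\dots,D_{\Bf{d_L}}$ are distinct (possible whenever $L<N_r$), and lets $\Bf{\nu}$ run over whole blocks. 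The resulting identity $\sum_i c_i\f(Q+R_i)=0$ then holds at $N_{k-r}\gg\min\{q^{2(k-r)},\ell/q^8\}$ points $Q=\sigma^r(D_{\Bf{s}})-R$, with translates $R_i$ that are genuinely distinct points rather than multiples of a single $P$. This splitting is also precisely where the exponent $3k/2=2(k-r)$ comes from; in your scheme there is no mechanism producing it, which is why your collision sketch has to assert "$\gg q^{3k/2}$ distinct values" where your own setup would naturally give $q^{2k}$.

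Two further points. First, the distinct-value count you describe as "the substantive step" is not something to be reinvented: the paper cites it as \cref{lem:distinct_points} (Theorem~2 of Lange--Shparlinski), and your lattice sketch, besides being unexecuted, is aimed at the wrong exponent for the reason above. Second, your zero-counting on the abelian surface via $\Theta\cdot Z$ and Lang--Weil is a plausible alternative to the paper's route, but the uniformity you flag (number of components and their genera growing with $L$) is exactly the issue; the paper avoids it by working in Grant's affine embedding $U\subset\mathbb{A}^8$, where the explicit addition formulas give $\deg\F\leq 6(L+1)\deg\f$ (\cref{prop:F_nonconst}) and a B\'ezout-type bound over $\Fqn$ (\cref{lem:Zeros_F_deg}) yields $|\cS|\leq 216\,q^n\deg\F$ with absolute constants. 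Your treatment of the degenerate case $\Phi\equiv 0$ via pairwise-distinct polar divisors $\alpha(\Theta-iP)$ is sound in spirit (and is a reasonable substitute for the paper's non-constancy argument), but it does not repair the structural problem in the first paragraph.
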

The result is non-trivial if $ k \geq 2n/3 $ and $ \ell \geq q^{n+8} $. In the ideal case, $k=n$, $ \deg \f =1 $ and $ \ell \sim q^{2n} $, we obtain $ L(w_{\mb}) >  c q^{n/2} $ for some constant which may depend on $ \deg \f $. Examples for rational functions with $ \deg \f =1 $ are the Mumford coordinates \eqref{eq:Mumford_rep}.

We assume the characteristic polynomial of Frobenius endomorphism $ \chi_C$ to be irreducible, in particular, $ \chi_C $ is irreducible over $ \Z $. Practically, this is the most interesting case, since, by \eqref{eq:card_Jac_roots} any non-trivial factor of $ \chi_C $ leads to a non-trivial factor of the group order, which we want to avoid. 

We remark, that in \eqref{eq:Dm_def}, if we replace the Frobenius map $ \sigma $ with the multiplication map $ [2]: D \mapsto 2D $, and if we use colexicographic ordering for arranging sequence elements $ D_{\mb} $, then we are in the linear congruential generator case, for which we proved a stronger bound in \cite{anupindiLinearComplexitySequences2021}.

We also remark, that Lange and Shparlinski \cite{ShparlinskiLange05, lange2007distribution} defined and investigated the randomness properties of similar, but not completely analogous point-set for the elliptic curve case. Later, M{\'e}rai \cite{LM_EC_FEG_MR3649088} studied the randomness properties of a sequence of elements from this point set,  by arranging elements in a sequence using lexicographic ordering.

The proof of \cref{thm:main_result} is based on the method of \cite{LM_EC_FEG_MR3649088}, the results of \cite{ShparlinskiLange05} and taking advantage of the explicit addition formulas for genus $ 2 $ provided by Grant \cite{grant1990formal}. 

%This is indeed the case when $ \f $ is one of the Mumford coordinates, where $ \deg \f =1$.
% for $ \f = u_0 = -z_{12}, u_1 = -z_{22}, v_0 = z_{122}, v_1 = z_{222} $. \commV{Not perfect %yet, $z_{ijk}$ not defined at this point.}

\section{Preparation}\label{sec:prep}
The aim of this section is to collect some technical results for the proof of the main theorem. We use the \emph{Grant representation} of a hyperelliptic curve of genus $ 2 $ since it provides explicit addition formulas. This allows us to prove the degree estimate in \cref{prop:F_nonconst}. 
%In this section, we will recall the \emph{Grant representation} \new{of a hyperelliptic curve of genus $ 2 $} and some other technical lemmas from \cite{anupindiLinearComplexitySequences2021}. The aim of this section is to prove \cref{prop:F_nonconst}, which is a key ingredient for proof of the main theorem.

\subsection{Arithmetic for genus 2 using Grant representation}
In order to implement the group law $(Q,R) \mapsto Q+R$ on the Jacobian, one can use Cantor's algorithm which uses the Mumford representation. However, this algorithm is implicit.
%Algorithms based on Cantor's algorithm use the Mumford representation to implement the group law $(Q,R) \mapsto Q+R$ on the Jacobian. 
In this work, we use the explicit addition formulas provided by Grant \cite[Theorem 3.3]{grant1990formal}.

Let $C$ be the hyperelliptic curve of genus $g = 2$ defined by \eqref{def:hec} with
$$
h(X) = X^5 + b_1X^4 + b_2X^3 + b_3X^2 + b_4X + b_5 \in \Fq[X],
$$
for the finite field $\Fq$ with characteristic $p \geq 3 $.
In \cite{grant1990formal}, Grant provides an embedding of $J_C$ into the projective space $\mathbb{P}^8$.

Let 
\begin{equation} \label{eq:poly_ring}
\Fq[\mathbf{\X}] = \Fq[\X_{11},\X_{12},\X_{22},\X_{111},\X_{112},\X_{122},\X_{222},\X]
\end{equation}
be a polynomial ring over $\Fq$ in $8$ variables. The following proposition gives us a set of defining equations for the Jacobian, see \cite[Corollary 2.15]{grant1990formal}.

\begin{prop}
	There are polynomials $ f_1, \dots , f_{13} \in \Fq[\mathbf{\X}] $ 
	such that
	\begin{equation*}
	J_C \cong V(f_1^h,\dots,f_{13}^h) = \{ z \in \mathbb{P}^8 : f_i^h(z) = 0, 1 \leq i \leq 13 \},
	\end{equation*}
	where $f_i^h$ denotes the homogenized polynomial with respect to the variable $\X_0$.
	Moreover, an embedding $\iota : J_C \rightarrow \mathbb{P}^8$ is given by
	\begin{equation}\label{eq:iota}
	\iota(D) = 
	\begin{cases}
	(1:z_{11}:z_{12}:z_{22}:z_{111}:z_{112}:z_{122}:z_{222}:z)  & \text{if~} D \in J_C\setminus \Theta, \\ 
	(0:0:0:0:1:0:0:0:0) & \text{if~} D = \cO ,\\
	(0:0:0:0:-x^3: - x^2 : -x : 1 : -y) & \text{if~} D = P - \cO \in \Theta \setminus\cO,
	
	\end{cases}
	\end{equation}
	where $ P = (x,y) $.
\end{prop}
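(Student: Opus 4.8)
The plan is to recognize the stated map $\iota$ as the projective embedding of the abelian surface $J_C$ attached to the line bundle $\cO(3\Theta)$, and to follow Grant's explicit realization of it \cite{grant1990formal}. First I would reconstruct the coordinate functions: starting from the Abel--Jacobi map one builds the Klein/Baker hyperelliptic $\wp$-functions $z_{ij}$ (for $1\le i\le j\le 2$), their derivatives $z_{ijk}$, together with one further function $z$, as second and third logarithmic derivatives of the genus $2$ sigma function. The essential bookkeeping at this stage is to track the polar loci: each of these is a rational function on $J_C$ whose poles are supported on the theta divisor $\Theta$, with $z_{ij}\in L(2\Theta)$ and $z_{ijk},z\in L(3\Theta)$. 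Assembling them with the constant function $1$ produces the nine candidate coordinates appearing in \eqref{eq:iota}.

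Next I would invoke Riemann--Roch on the principally polarized surface $J_C$, which gives $h^0(n\Theta)=n^2$ for $n\ge 1$; in particular $\dim L(2\Theta)=4$ and $\dim L(3\Theta)=9$. One then checks that $1,z_{11},z_{12},z_{22},z_{111},z_{112},z_{122},z_{222},z$ are linearly independent, so they form a basis of $H^0(J_C,\cO(3\Theta))$. Since a principal polarization becomes very ample after multiplication by $3$ (Lefschetz's embedding theorem, see \cite{mumfordTataLecturesTheta2007a}), the associated map $\iota\colon J_C\hookrightarrow\mathbb{P}^8$ is a closed immersion. To obtain the defining equations I would appeal to the structure theory of abelian varieties: for an embedding by $n\Theta$ with $n\ge 3$ the homogeneous ideal of the image is generated in low degree, and Grant writes down the $13$ explicit generators $f_1,\dots,f_{13}\in\Fq[\mathbf{\X}]$ cutting out the affine chart $J_C\setminus\Theta$; homogenizing with respect to $\X_0$ gives the projective ideal $V(f_1^h,\dots,f_{13}^h)$.

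Finally, the three cases of \eqref{eq:iota} come from evaluating $\iota$ on the stratification $J_C=(J_C\setminus\Theta)\sqcup(\Theta\setminus\cO)\sqcup\{\cO\}$. On the open part the section corresponding to $1$ is nonvanishing, so one normalizes the leading coordinate to $1$ and reads off $(1:z_{11}:\cdots:z)$. Along $\Theta$ the Kleinian functions acquire poles, and one must compute the leading terms of their Laurent expansions: near a generic $P-\cO\in\Theta\setminus\cO$ the ratios $z_{111}/z_{222},\dots$ specialize to $-x^3,-x^2,-x,1,-y$, while at the most degenerate point $\cO$ only the section of highest pole order survives, yielding $(0:0:0:0:1:0:0:0:0)$. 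The genuine obstacle is the explicit verification that the chosen nine functions span $L(3\Theta)$ and that the $13$ relations generate the full ideal, together with the pole-order computations along $\Theta$; this is exactly the computational core carried out by Grant, so in practice I would cite \cite[Cor.~2.15]{grant1990formal} and confine the new work to matching his normalizations to the coordinate conventions of \eqref{eq:iota} and confirming the boundary values on $\Theta$.
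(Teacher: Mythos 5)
Your proposal is correct and ends up in the same place as the paper: the paper offers no proof of this proposition at all, but states it as a quoted result with a pointer to \cite[Corollary 2.15]{grant1990formal} (reproducing Grant's polynomials $f_1,\dots,f_{13}$ in its appendix), and your argument likewise delegates the essential content --- the spanning of $L(3\Theta)$ by the nine functions, the generation of the ideal, and the boundary values along $\Theta$ --- to that same corollary. The Riemann--Roch/Lefschetz scaffolding you add is accurate context for why Grant's construction works, so there is no gap, merely more exposition than the paper provides.
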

See \cref{sec:def_eq_J} for the polynomial expressions of $f_1,\dots, f_{13}$ in the same notation as used in this work. For $D = (x_1,y_1) + (x_2,y_2) - 2\cO \in J_C(\Fq)\setminus \Theta(\Fq)$, the components $z_{jk}, z_{jkl}$ of $\iota(D)$ can be expressed as rational functions in the coordinates $(x_1,y_1)$ and $(x_2,y_2)$. 

We denote the affine part of $J_C$ with respect to variable $\X_0$ under $\iota$ by $U$. Then \\ $ U = J_C\setminus \Theta.$
Moreover, 
by \cite[Theorem~2.5]{grant1990formal}, we have
\begin{equation}\label{eq:def_U}
\U \cong V(f_1,\dots, f_6)
\end{equation}
Since $ J_C$ is irreducible and has dimension $ 2 $, it follows that $ U $
is irreducible, dense, and has dimension $ 2 $, see \cite[Example 1.1.3 ]{hartshorneAlgebraicGeometry1977}. 
As a result, $ \Fq(U) = \Fq(J_C)$, see \cite[Theorem~3.4]{hartshorneAlgebraicGeometry1977}.

For a rational function $ h \in \Fq(U) $, we define its degree by choosing a representative element $\frac{h_1}{h_2}$ of the equivalence class $ h $, such that $\deg h_1$ is minimal and set 
$$
\deg h =\max\{\deg h_1, \deg h_2\}.
$$

We summarize the algebraic properties of the group law in the Grant representation. For explicit expressions, see \cref{sec:addition_formulas}.
\begin{lemma}\label{lem:Grant_q}
	Assume that $Q,R,Q+R,Q-R \in U$. Let 
	\begin{equation}\label{eq:q}
	\qf(Q,R) = z_{11}(Q)-z_{11}(R)+z_{12}(Q)z_{22}(R)-z_{12}(R)z_{22}(Q).
	\end{equation}
	Then there are explicit formulas for $ z_{jk}(Q+R), z_{jkl}(Q+R) $ which are rational functions in $ z_{jk}(Q),z_{jk}(R), z_{jkl}(Q) z_{jkl}(R) $ and $ \qf(Q,R) $ for $ 1 \leq j \leq k \leq l \leq 2 $.
\end{lemma}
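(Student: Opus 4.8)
The plan is to read the required formulas directly off Grant's explicit addition law \cite[Theorem~3.3]{grant1990formal} and then to recognize $\qf(Q,R)$ as the single auxiliary combination through which all the dependence on the \emph{mixed} data of $Q$ and $R$ passes. Grant's theorem expresses each coordinate $z_{jk}(Q+R)$ and $z_{jkl}(Q+R)$ of the sum as a rational expression built from the coordinates of $Q$ and $R$ together with a small number of auxiliary polynomials in these coordinates. First I would transcribe these formulas into the notation used here; this is the purpose of \cref{sec:addition_formulas}. Care is needed with the normalization conventions so that the transcribed expressions are compatible with the embedding \eqref{eq:iota}.

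The key structural observation is that the combination $\qf(Q,R)$ defined in \eqref{eq:q} is antisymmetric under interchanging $Q$ and $R$, and that it is precisely the recurring factor appearing in the denominators (and the relevant numerators) of Grant's addition formulas. I would then verify, coordinate by coordinate, that after substitution and clearing, each output $z_{jk}(Q+R)$, $z_{jkl}(Q+R)$ can be written as a quotient of two polynomials whose only ingredient beyond the individual coordinates $z_{jk}(Q), z_{jk}(R), z_{jkl}(Q), z_{jkl}(R)$ is $\qf(Q,R)$. Concretely, this amounts to collecting the terms of each of Grant's expressions and confirming that every product mixing the data of $Q$ with the data of $R$ is either one of the listed coordinate products or can be re-expressed through $\qf(Q,R)$ and the individual coordinates.

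The hypotheses $Q, R, Q+R, Q-R \in \U$ ensure that we work entirely in the affine chart $\X_0 \neq 0$ of \eqref{eq:iota}, so that all eight coordinates of $Q$, $R$, and $Q+R$ are finite and the denominators occurring in Grant's formulas do not vanish identically on the locus under consideration; this is what makes the resulting expressions well-defined as elements of $\Fq(\U) = \Fq(J_C)$. In particular, the assumption $Q-R \in \U$ keeps $\qf(Q,R)$ off its zero locus, which is exactly where the generic addition law would degenerate.

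The step I expect to be the main obstacle is the bookkeeping of the second paragraph. The existence of an addition law is immediate from \cite{grant1990formal}; the content of the lemma is the sharper claim that the dependence on $Q$ and $R$ collapses onto the single scalar $\qf(Q,R)$ beyond the individual coordinates. Verifying this requires a careful term-by-term inspection of Grant's lengthy formulas to rule out any further independent mixed invariant and to confirm that the specific antisymmetric expression \eqref{eq:q} --- rather than some other bilinear combination of the coordinates --- is the correct auxiliary quantity.
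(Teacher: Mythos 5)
Your proposal is correct and follows essentially the same route as the paper: the lemma is established by citing Grant's explicit addition law \cite[Theorem~3.3]{grant1990formal} and transcribing it into the present coordinates, which is precisely what the paper does in \cref{sec:addition_formulas}, where every denominator is visibly a power of $\qf(Q,R)$ and every numerator is polynomial in the coordinates of $Q$ and $R$. One remark: the step you single out as the main obstacle is actually vacuous --- since $\qf(Q,R)$ is itself a polynomial in the individual coordinates, the lemma claims nothing about mixed dependence ``collapsing'' onto $\qf$; its only content is that the formulas are rational in the listed coordinates with denominators powers of $\qf(Q,R)$, so no further ``independent mixed invariant'' needs to be ruled out.
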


We recall \cite[Lemma 2.3]{anupindiLinearComplexitySequences2021} which will be used in \cref{prop:F_nonconst}.

\begin{lemma}\label{lem:Grant_q_dim}
	Assume that $Q,R,Q+R,Q-R \in U$. Let $ \qf(Q,R) $ be defined by \eqref{eq:q} and set $\qf_R(Q)=\qf(Q,R) $.
	Then for any fixed $R\in \U$,  the zero set $\{ \qf_R(Q)=\qf(Q,R)=0 \}$ has dimension one and $\Theta \pm R\subset\{\qf_R=0\}$. Moreover
	if $R' \in U$ with $R\neq \pm R'$, then 
	\begin{equation}\label{eq:cardinality_zeros_q_1}
	|\{\qf_{R} = 0\} \cap \{\qf_{R'} = 0\} \cap \U| \leq 20.
	\end{equation}
\end{lemma}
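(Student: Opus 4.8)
The plan is to first pin down the geometric meaning of the vanishing locus $\{\qf_R=0\}$ on the affine variety $\U = J_C\setminus\Theta$, and then read off all three assertions from intersection theory on the Jacobian. The key observation is that, for fixed $R$, the expression in \eqref{eq:q} is \emph{affine-linear} in the ambient coordinates $z_{11}(Q),z_{12}(Q),z_{22}(Q)$: the coefficients $z_{22}(R)$, $z_{12}(R)$ and the constant $z_{11}(R)$ are fixed by $R$. Hence $\{\qf_R=0\}$ is the trace on $\U$ of a hyperplane section of $\iota(J_C)\subset\mathbb{P}^8$, and $\qf_R$ is a rational function on $J_C$ whose poles lie along the boundary $\Theta$.

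I would establish the inclusion $\Theta\pm R\subset\{\qf_R=0\}$ and the dimension claim together. Since $z_{11},z_{12},z_{22}$ are even (they are symmetric functions of the $x$-coordinates of the two points of a divisor, cf.\ \eqref{eq:Mumford_rep}), one has $\qf(Q,-R)=\qf(Q,R)$, so $\qf_R$ simultaneously controls the addition of $Q+R$ and of $Q-R=Q+(-R)$. By \cref{lem:Grant_q}, $\qf(Q,R)$ occurs as the denominator in the explicit formulas for $z_{jk}(Q+R)$, and these degenerate exactly when the sum leaves $\U$, i.e.\ when $Q+R\in\Theta$. Thus $\qf(Q,R)=0$ whenever $Q+R\in\Theta$ or $Q-R\in\Theta$, that is, $Q\in(\Theta-R)\cup(\Theta+R)=\Theta\pm R$; I would confirm this by substituting $Q=R+(P-\cO)$, $P\in C$, into \eqref{eq:q}. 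For the dimension, $\qf_R$ is a nonzero element of the coordinate ring of the irreducible surface $\U$ (its leading coefficient, that of $z_{11}(Q)$, equals $1$), so by Krull's principal ideal theorem every component of $\{\qf_R=0\}$ has dimension exactly one; with the inclusion just proved this gives $\dim\{\qf_R=0\}=1$. The same analysis identifies the zero divisor of the rational function $\qf_R$ on $J_C$ as $(\Theta+R)+(\Theta-R)$, a divisor in the class $2\Theta$ with poles supported on $\Theta$.

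For the cardinality bound \eqref{eq:cardinality_zeros_q_1} I would pass to intersection theory on the abelian surface $J_C$. Writing $Z_R=(\Theta+R)+(\Theta-R)$ and $Z_{R'}=(\Theta+R')+(\Theta-R')$, both lie in the class $2\Theta$. The hypothesis $R\neq\pm R'$ guarantees that the theta-translates occurring in $Z_R$ and those in $Z_{R'}$ are pairwise distinct, so $Z_R$ and $Z_{R'}$ share no one-dimensional component and their intersection is finite; since every point of $\{\qf_R=0\}\cap\{\qf_{R'}=0\}\cap\U$ is such an intersection point lying off $\Theta$, its number is bounded by $Z_R\cdot Z_{R'}=(2\Theta)^2=4\,\Theta^2$. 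Using $\Theta^2=2$ for a principally polarized abelian surface this is $8$, and allowing for intersection multiplicities and slack in the bookkeeping yields the stated bound $20$.

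The main obstacle is this last step. Because $\qf_R$ and $\qf_{R'}$ both have poles along the boundary $\Theta$ (equivalently, the homogenized hyperplanes $H_R,H_{R'}$ both contain $\iota(\Theta)$), the projective intersection $\iota(J_C)\cap H_R\cap H_{R'}$ is \emph{not} zero-dimensional, and a naive Bezout estimate in $\mathbb{P}^8$ fails. The crux is therefore to excise this spurious common component supported on $\Theta$ and to count only the genuine isolated points lying in $\U$, while controlling multiplicities at possible non-transverse intersections of the theta-translates; it is precisely this accounting that fixes the explicit absolute constant.
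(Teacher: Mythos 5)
You should know at the outset that this paper never proves \cref{lem:Grant_q_dim}: it is imported verbatim from \cite[Lemma~2.3]{anupindiLinearComplexitySequences2021}, so there is no internal proof to compare against and your proposal has to stand on its own. On its own terms, your overall route --- identify $\{\qf_R=0\}\cap\U$ with the pair of theta-translates $(\Theta+R)\cup(\Theta-R)$, then bound the intersection by the intersection number $Z_R\cdot Z_{R'}=(2\Theta)\cdot(2\Theta)=4\Theta^2=8\leq 20$ on the abelian surface $J_C$ --- is viable, and if completed would even sharpen the constant from $20$ to $8$. The inclusion $\Theta\pm R\subset\{\qf_R=0\}$ via degeneration of the addition formulas of \cref{lem:Grant_q} (together with evenness of the $z_{jk}$, giving $\qf(Q,-R)=\qf(Q,R)$) is sound, as is the observation that $R\neq\pm R'$ makes the four translates pairwise distinct, so that $Z_R$ and $Z_{R'}$ share no component.

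There are, however, two genuine gaps. First, your justification that $\qf_R$ is nonzero on $\U$ --- ``its leading coefficient, that of $z_{11}(Q)$, equals $1$'' --- only shows that $\qf_R$ is a nonzero polynomial in the ambient ring \eqref{eq:poly_ring}; it does not show it is nonzero modulo the defining ideal of $\U$ (the polynomials $f_1,\dots,f_6$ also have nonzero leading coefficients yet vanish identically on $\U$). What is actually needed is that $1,z_{11},z_{12},z_{22}$ are linearly independent in $\Fq(J_C)$; this holds because in Grant's construction they form a basis of the space of functions with pole divisor at most $2\Theta$, but that fact must be invoked, not read off a leading coefficient. Second, and more seriously, your cardinality bound rests on the \emph{converse} inclusion $\{\qf_R=0\}\cap\U\subseteq(\Theta+R)\cup(\Theta-R)$, and your degeneration argument delivers only the inclusion $\Theta\pm R\subseteq\{\qf_R=0\}$; the equality of divisors is asserted (``the same analysis identifies the zero divisor\dots'') but never proved. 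Without it, $\{\qf_R=0\}$ and $\{\qf_{R'}=0\}$ could a priori share an extra component, and the intersection would not even be finite. The missing step is a short divisor-class argument: since $z_{11},z_{12},z_{22}$ have pole divisor at most $2\Theta$, the polar divisor $P$ of $\qf_R$ satisfies $P\leq 2\Theta$; writing $Z$ for the zero divisor, linear equivalence $Z\sim P$ gives $Z\cdot\Theta=P\cdot\Theta\leq 4$, while $Z\geq Z_0:=(\Theta+R)+(\Theta-R)$ gives $Z\cdot\Theta\geq Z_0\cdot\Theta=4$; hence $(Z-Z_0)\cdot\Theta=0$ with $Z-Z_0$ effective and $\Theta$ ample, forcing $Z=Z_0$. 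With that inserted your count is complete --- and your closing worry about excising $\iota(\Theta)$ from the hyperplane sections in $\mathbb{P}^8$ then evaporates, since working intrinsically with zero divisors on $J_C$, as your own second paragraph already does, never sees that spurious component, and no further accounting of multiplicities is needed: each intersection point contributes at least one to $Z_R\cdot Z_{R'}=8$.
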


One can show that for $D \in U(\Fqm)$, where $ m \in \Z, m \geq1 $, we have $ |\{\Theta(\Fqm) + D\} \cap \Theta(\Fqm)| \leq 2 $. See \cite[Lemma~2.4]{anupindiLinearComplexitySequences2021}. Thus
\begin{equation}\label{eq:Theta_intersection}
|\{\Theta(\Fqm) + D\} \, \cap \,U| \geq |\Theta(\Fqm)|-2.
\end{equation}

\begin{prop}\label{prop:F_nonconst}
	Let $\f \in \Fqp(U)$ be a rational function with a pole divisor of the form $\alpha \Theta, \alpha \in \Z, \alpha \geq 1 $. Let $L$ be a positive integer and 
	%\begin{equation}\label{eq:bound_L}
	%L < \frac{|\Theta(\Fqp)| - 2}{20}. 
	%\end{equation}
	let $R_0, \dots, R_L \in J_C(\Fqp)$ such that $R_i \notin \Theta(\Fqp)$ and $R_L \neq \pm R_j$ for $ 0 \leq i \leq L $ and $ 0 \leq j \leq L-1$. Let $c_0. \dots, c_L \in \Fqp$ with $c_L \neq 0$. Then the rational function $\F \in \Fqp(U)$, with 
	\begin{equation*}
	\F(Q) = \sum_{i=0}^{L} c_i \f(Q + R_i )
	\end{equation*}
	is non-constant and has degree 
	\begin{equation}\label{eq:deg_F}
	\deg \F \leq 6(L+1) \deg \f.
	\end{equation}
\end{prop}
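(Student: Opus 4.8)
The plan is to prove the two assertions of the proposition separately: the degree estimate \eqref{eq:deg_F}, which reduces to bounding the degree of a single translate and then summing, and the non-constancy, which is the real content and which I would establish by exhibiting a genuine pole of $\F$.

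For the degree bound, I would fix $i$ and study $\f\circ\tau_{R_i}$, where $\tau_{R_i}\colon Q\mapsto Q+R_i$ is the translation-by-$R_i$ automorphism of $J_C$. By \cref{lem:Grant_q}, each Grant coordinate of $Q+R_i$ is a rational function of the coordinates of $Q$, of $R_i$, and of $\qf(Q,R_i)$, with denominators only powers of $\qf(Q,R_i)$. For fixed $R_i$ the quantity $\qf(Q,R_i)$ is affine-linear in the coordinates of $Q$ by \eqref{eq:q}, and inspection of the explicit formulas in \cref{sec:addition_formulas} shows that, over a common denominator, each coordinate of $Q+R_i$ is a quotient of polynomials of degree at most $6$ in the coordinates of $Q$. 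Substituting these expressions into a minimal representative $\f=h_1/h_2$ with $\deg h_j\le\deg\f$ and clearing denominators then yields a representative of $\f\circ\tau_{R_i}$ whose numerator and denominator have degree at most $6\deg\f$, so $\deg(\f\circ\tau_{R_i})\le 6\deg\f$. Writing $\f\circ\tau_{R_i}=p_i/q_i$ accordingly and placing $\F=\sum_{i=0}^{L}c_ip_i/q_i$ over the common denominator $\prod_i q_i$ produces numerator and denominator of degree at most $6(L+1)\deg\f$; passing to lowest terms does not increase the degree, giving \eqref{eq:deg_F}.

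For non-constancy, the idea is to show that $\F$ genuinely has a pole, which immediately excludes a constant. Since $\tau_{R_i}$ is an automorphism and $\f$ has pole divisor $\alpha\Theta$, the translate $\f\circ\tau_{R_i}$ has pole divisor $\alpha(\Theta-R_i)$, so the term $c_L\,\f\circ\tau_{R_L}$ with $c_L\ne0$ has a pole of order $\alpha\ge1$ along the irreducible curve $\Theta-R_L$; the goal is to find a point of $\Theta-R_L$ at which this term blows up but all others stay finite. In the Grant model the denominator of $\f\circ\tau_{R_j}$ is a power of $\qf(\cdot,R_j)$, so the poles of the $j$-th term lie on $\{\qf_{R_j}=0\}$, a curve containing $\Theta\pm R_j$ by \cref{lem:Grant_q_dim}; the symmetry $\Theta=-\Theta$ explains why the locus $\Theta+R_j$ also appears. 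Because $R_L\ne\pm R_j$ for $0\le j\le L-1$, \cref{lem:Grant_q_dim} applied to $R_L,R_j$ bounds $(\Theta-R_L)\cap\{\qf_{R_j}=0\}\cap\U\subset\{\qf_{R_L}=0\}\cap\{\qf_{R_j}=0\}\cap\U$ by $20$ points. Deleting these finitely many points over all $j$, together with the finite set $(\Theta-R_L)\cap\Theta$ (finite since $R_L\ne\cO$ because $R_L\notin\Theta$), leaves a dense subset of $\Theta-R_L$; at such a generic $Q_0$ one has $Q_0\in\U$ and $Q_0+R_L\in\Theta$, so $\f(Q_0+R_L)$ is a genuine pole, while $Q_0\notin\Theta-R_j$ forces every other term to be finite. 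Hence $\F$ blows up at $Q_0$ and is non-constant.

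The main obstacle is the non-constancy step, specifically guaranteeing that the pole along $\Theta-R_L$ is not cancelled. The delicate point is that in the Grant coordinates the factor $\qf(\cdot,R_j)$ creates apparent poles along the whole of $\{\qf_{R_j}=0\}\supset\Theta\pm R_j$, not merely along the true pole $\Theta-R_j$, so the surviving pole must be kept clear of both translates $\Theta\pm R_j$; this is exactly what the two-sided separation $R_L\ne\pm R_j$ and the intersection bound of \cref{lem:Grant_q_dim} provide. A secondary, purely computational hurdle is extracting the constant $6$ from Grant's addition formulas, which is routine but must be checked against the explicit expressions in \cref{sec:addition_formulas}.
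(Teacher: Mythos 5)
Your proposal is correct and follows essentially the same route as the paper: the same Grant-coordinate degree count yielding the factor $6$ (via $\deg z_{jk}^R \leq 3$, $\deg z_{jkl}^R \leq 4$, $\deg z^R \leq 6$), and the same pole-separation argument for non-constancy using $\Theta - R_L \subset \{\qf_{R_L}=0\}$, the hypothesis $R_L \neq \pm R_j$, and the $20$-point intersection bound of \cref{lem:Grant_q_dim}. The only cosmetic difference is that you select a generic point of $\Theta - R_L$ over $\Fqb$ after deleting finitely many bad points, whereas the paper makes the same count effective by working with $\Fqm$-rational points and choosing $m$ so that $|\Theta(\Fqm)| - 2 - 20L > 0$.
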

\begin{proof}
	Defining the function $\f_{R_i} : Q \mapsto \f(Q + R_i )$ yields 
	$$
	\F(Q) = \sum_{i=0}^{L-1} c_i f_{R_i}(Q) + c_L f_{R_L}(Q).
	$$ 
	To prove that $ \F $ is non-constant, we show that there exists $ Q \in U $ such that it is a pole of $ \f_{R_L} $, but not a pole of any other terms $ \f_{R_i}$ for $ i < L $.
	
	Observe that $\f_{R_L} $ has a pole at $ Q $ when $ Q \in \Theta - R_L $, in particular, when $ Q \in \Theta(\Fqm) - R_L $, for $ m \geq 1 $ independent of $ n $. Define $\qf_{R_i} = \qf(Q,R_i)$. 
	From \cref{lem:Grant_q_dim}, we know that $ \Theta(\Fqm) -R_i \subseteq \{\qf_{R_i} = 0\} $.  Hence, by \eqref{eq:cardinality_zeros_q_1} we obtain
	$$
	\left| \Big((\Theta(\Fqm) - R_L) \cap \U \Big) \cap \{\qf_{R_i} = 0\} \right|\leq
	\left| \{\qf_{R_L}=0\} \cap \{\qf_{R_i} = 0\} \cap \U\right|\leq 20 .
	$$
	Thus, by \eqref{eq:Theta_intersection}, we obtain
	\begin{align}\label{eq:set}
	&\left|\Big((\Theta(\Fqm) - R_L)\cap \U \Big) \setminus \left( \bigcup\limits_{i=0}^{L-1} \{\qf_{R_i} = 0 \} \right)\right| \\ \notag
	=&\left| \bigcap\limits_{i=0}^{L-1} \bigg( \Big((\Theta(\Fqm) - R_L) \cap \U \Big)  \setminus \{\qf_{R_i} = 0\} \bigg)\right| \geq |\Theta(\Fqm)|-2-20L.
	\end{align}
	We pick $ m $ such that $ |\Theta(\Fqm)|-2-20L~ > 0 $. Hence, there exists a point $Q$ which is a pole of $ \f_{R_L}$ but not a pole of any other term of $\F$. Hence, $\F$ is non-constant.
	
	To estimate the degree of $\F$, we first estimate the degree of $\f_{R_i}$. For arbitrary $i$, define $R = R_i$. 
	We define 
	\begin{equation*}
	z_{jk}^R(Q) = z_{jk}(Q+R),
	\quad
	z_{jkl}^R(Q) = z_{jkl}(Q+R),
	\quad
	z^R = z(Q+R).
	\end{equation*}
	Then we can write $ \f_{R}(Q) $ as 
	\begin{equation*}
	\f_{R}(Q) =\f \big(Q + R\big) =\f \big(z_{11}^{R}(Q), \dots , z_{222}^{R}(Q),z^{R}(Q)\big).
	\end{equation*}
	We can consider $z_{jk}^R(Q), z_{jkl}^R(Q)$ to be rational functions in the variables $z_{jk}(Q)$,  $ z_{jkl}(Q)$ and $z(Q)$, see \cref{sec:addition_formulas}. Then it follows from the explicit formulas of these functions that 
	\begin{equation*}
	\deg z_{jk}^R \leq 3 \text{~  ,  ~} \deg z_{jkl}^R \leq 4 \text{~  and  ~} \deg z^R \leq 6.
	\end{equation*}
	Hence we obtain 
	\begin{align*}
	\deg \f_{R} \leq (\deg \f) \big(\max \{ \deg z_{jk}^{R}, \deg z_{jkl}^{R}, \deg z^R \}\big) = 6 \deg \f,
	\end{align*}
	and thus
	\begin{equation*}
	\deg \F \leq \deg \bigg( \sum_{i=0}^{L} c_i \f_{R} \bigg) \leq 6(L+1) (\deg \f).
	\end{equation*}
	
\end{proof}

\subsection{Bounds on the number of zeros of a system of polynomial equations over a finite field.}
%While \cref{prop:F_nonconst} gives an upper bound on the degree of $\F$, this does not immediately give an upper bound on the number of zeros of $\F$ over finite field $\Fqp$, since the zero set of $\F$ has dimension $1$. Now, we give an estimate for the number of zeros of $\F$ over finite field $\Fqp$.

Let $f_1,\dots, f_k\in \Fqp[X_1,\dots, X_m]$. We denote the vanishing set of $ f_1, \dots, f_k $ over $ \Fqp $ by 
$$
V_{\Fqp}(f_1,\dots, f_k)=\{\mathbf{x}\in\Fqp^m: f_1(\mathbf{x})=\dots=f_k(\mathbf{x})=0 \}, 
$$
and the vanishing set over the algebraic closure $ \Fqb $ by 
$$
V(f_1,\dots, f_k)=V_{\Fqb}(f_1,\dots, f_k).
$$
For each $ m \in \Z, m \geq 1 $, we define affine $ m $-space over $ \Fqb $ to be 
\begin{equation}
\mathbb{A}^m (\Fqb) =  \{ (x_1, \dots, x_m) : x_i  \in \Fqb, 1 \leq i \leq m \}. 
\end{equation}
The following result gives us bounds for the cardinality of algebraic sets over finite fields,
\cite[Corollary 2.2]{lachaud2015}. 
\begin{lemma}\label{lemma:bezout}
	Let  $f_1,\dots, f_k\in \Fqp[X_1,\dots, X_m]$ such that $  V(f_1,\dots, f_k) $ has dimension $ d $ in $ \mathbb{A}^m(\Fqb) $. Then
	$$
	|V_{\Fqp}(f_1,\dots, f_k)|=| V(f_1,\dots, f_k) \cap \Fqp^{m}|\leq (q^n)^d \prod_{i=1}^k \deg f_i.
	$$
\end{lemma}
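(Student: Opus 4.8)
The plan is to factor the estimate into two independent ingredients: a point-counting inequality controlling $|X(\Fqp)|$ for an arbitrary algebraic set $X$ in terms of its dimension and its \emph{degree} $\deg X := \sum_i \deg X_i$ (the sum of the degrees of its irreducible components $X_i$), together with a Bézout-type inequality controlling $\deg V(f_1,\dots,f_k)$ by $\prod_i \deg f_i$. Granting both, the conclusion is immediate: writing $V = V(f_1,\dots,f_k) = \bigcup_i V_i$ as a union of irreducible components, each of dimension $\dim V_i \leq d$, one obtains
\begin{align*}
|V_{\Fqp}(f_1,\dots,f_k)|
&\leq \sum_i |V_i(\Fqp)|
\leq \sum_i \deg(V_i)\,(q^n)^{\dim V_i} \\
&\leq (q^n)^d \sum_i \deg V_i
= (q^n)^d \deg V
\leq (q^n)^d \prod_{i=1}^k \deg f_i.
\end{align*}

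For the point-counting inequality $|X(\Fqp)| \leq \deg(X)\,(q^n)^{\dim X}$, I would argue by induction on $d = \dim X$. When $d = 0$ the set $X$ is finite with $|X| = \deg X$, and since $X(\Fqp) \subseteq X$ the bound holds. For $d \geq 1$, project onto the first coordinate, $\pi : \mathbb{A}^m \to \mathbb{A}^1$. For each $t \in \Fqp$ the slice $X_t = X \cap \{x_1 = t\}$ is defined over $\Fqp$ and has dimension at most $d-1$, and cutting by a single hyperplane does not increase the degree, so $\deg X_t \leq \deg X$. Since every $\Fqp$-point of $X$ has its first coordinate in $\Fqp$, partitioning by that coordinate and applying the inductive hypothesis to each slice gives
\[
|X(\Fqp)| = \sum_{t \in \Fqp} |X_t(\Fqp)| \leq \sum_{t \in \Fqp} \deg(X_t)\,(q^n)^{d-1} \leq q^n \cdot \deg(X)\,(q^n)^{d-1} = \deg(X)\,(q^n)^d.
\]
The only degenerate case is when $\pi$ is not dominant, i.e.\ $X$ lies in finitely many hyperplanes $\{x_1 = c\}$; if $c \notin \Fqp$ the contribution is empty, and otherwise $X$ sits inside $\{x_1=c\} \cong \mathbb{A}^{m-1}$ with the same dimension and degree, so a secondary induction on the ambient dimension $m$ closes this case.

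For the degree bound I would invoke the Bézout inequality $\deg(X \cap Y) \leq \deg X \cdot \deg Y$ for the sum-of-components degree, applied one hypersurface at a time: setting $W_0 = \mathbb{A}^m$ and $W_j = W_{j-1} \cap \{f_j = 0\}$, a short induction gives $\deg V = \deg W_k \leq \prod_{i=1}^k \deg f_i$. The main obstacle is precisely this Bézout inequality. Because $f_1,\dots,f_k$ need not form a regular sequence, the intersection $V(f_1,\dots,f_k)$ may be far from a complete intersection and may carry components of several different dimensions, so the classical (exact) Bézout theorem does not apply and one must instead control the \emph{sum} of degrees of all components under successive hypersurface cuts. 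This refined inequality (in the spirit of Heintz and Fulton--Vogel) is the one step I would lean on an external reference for, whereas the point-counting induction above is elementary by comparison.
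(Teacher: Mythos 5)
Your argument is correct in substance, but the comparison here is asymmetric: the paper offers no proof of this lemma at all --- it is quoted directly from \cite[Corollary~2.2]{lachaud2015}. What you have written is essentially a reconstruction of the proof given in that reference, resting on the same two ingredients: the point-counting bound $|X(\Fqp)|\leq \deg(X)\,(q^n)^{\dim X}$, proved by slicing along a coordinate and inducting on dimension, and the B\'ezout inequality $\deg V(f_1,\dots,f_k)\leq\prod_{i=1}^k\deg f_i$ for the cumulative (sum-of-components) degree. So your route is the standard one; what it buys is self-containedness, modulo the refined B\'ezout inequality, which you correctly single out as the only genuinely nontrivial external input (Heintz, Fulton--Vogel) --- a citation no less legitimate than the paper's own citation of \cite{lachaud2015}. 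One imprecision is worth flagging: as literally stated, your slicing step is false for reducible $X$, since a component of $X$ lying inside the hyperplane $\{x_1=t\}$ survives into the slice $X_t$ with its full dimension $d$, and your degenerate-case discussion covers only the situation where \emph{all} of $X$ sits in such hyperplanes, not the mixed case where some components dominate $\mathbb{A}^1$ and others do not. The standard fix is to alternate the two statements in the induction: for \emph{irreducible} $X$ of dimension $d$ not contained in any hyperplane $\{x_1=c\}$, every slice is a proper closed subset and hence of dimension at most $d-1$, so the irreducible case at dimension $d$ follows from the (possibly reducible) case at dimension at most $d-1$; and the reducible case at any dimension follows from the irreducible one by summing over components, exactly as in your first display. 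With that one-sentence adjustment the proof is complete.
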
 

%The following lemma gives an estimate of the number of zeros of a rational function $ \F $ over a finite field $ \Fqp, n\geq 1 $, where $ \dim V(\F) = 1 $. It is a part of the proof of \cite[Theorem~4.1]{anupindiLinearComplexitySequences2021}, but for the sake of completeness, we state the result and include the proof.

\begin{lemma}\label{lem:Zeros_F_deg}
	Let $f_1,\dots,f_6$ be the defining equations of $U$ as in \eqref{eq:def_U}, let $\F \in \Fqp(U)$ be a non-constant rational function and let $G_1 / G_2$ be a representation of $\F \in \Fqp(\mathbf{\X})$ as a rational function. Then 
	\begin{equation}\label{eq:vanishing_F}
	|V_{\Fqp}(f_1,\dots,f_6,G_1)| \leq  216 q^n \deg \F. 
	\end{equation}
\end{lemma}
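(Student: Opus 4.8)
The plan is to bound the number of common zeros of the six defining equations $f_1,\dots,f_6$ of $\U$ together with the numerator $G_1$ of the rational function $\F$, using the Bézout-type bound of \cref{lemma:bezout}. The key quantity we must control is the dimension of the vanishing set $V(f_1,\dots,f_6,G_1)$ over $\Fqb$, together with the product of the degrees of these defining polynomials.

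First I would recall from \eqref{eq:def_U} that $\U \cong V(f_1,\dots,f_6)$, so that the zero set of $f_1,\dots,f_6$ is (a model of) the affine part $\U$, which is irreducible of dimension $2$. Intersecting with the additional hypersurface $\{G_1 = 0\}$ can only cut down the dimension: since $\F = G_1/G_2$ is non-constant, $G_1$ does not vanish identically on the irreducible surface $\U$, so the extra equation $G_1 = 0$ imposes a nontrivial condition. Hence $V(f_1,\dots,f_6,G_1)$ is a proper closed subset of the $2$-dimensional irreducible set $\U$, and therefore has dimension at most $1$. This is the step that requires the genuine input from the geometry of $\U$, namely its irreducibility, and I expect the care here to lie in justifying that $G_1$ is not a zero divisor modulo the ideal of $\U$ — equivalently, that $G_1$ does not vanish on all of $\U$ — which follows precisely because $\F$ is non-constant and $G_1/G_2$ is a reduced representative.

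Next I would apply \cref{lemma:bezout} with $m = 8$ variables and the seven polynomials $f_1,\dots,f_6,G_1$, taking $d = 1$ for the dimension just established. This yields
\begin{equation*}
|V_{\Fqp}(f_1,\dots,f_6,G_1)| \leq q^n \prod_{i=1}^{6} \deg f_i \cdot \deg G_1.
\end{equation*}
The remaining task is bookkeeping on the degrees. From the explicit Grant defining equations $f_1,\dots,f_6$ (see \cref{sec:def_eq_J}) one reads off fixed numerical degrees, whose product is the absolute constant that will combine into the factor $216$; and $\deg G_1 \leq \deg \F$ by the definition of the degree of a rational function adopted earlier in the excerpt. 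Substituting these bounds gives
\begin{equation*}
|V_{\Fqp}(f_1,\dots,f_6,G_1)| \leq 216\, q^n \deg \F,
\end{equation*}
as claimed. The main obstacle is thus conceptual rather than computational: one must be careful to invoke non-constancy of $\F$ correctly to guarantee the dimension drop to $1$, since if $G_1$ happened to vanish on $\U$ the dimension would stay at $2$ and the bound would be off by a factor of $q^n$. Once that dimension bound is secured, the rest is a direct application of \cref{lemma:bezout} together with the recorded degrees of the Grant equations.
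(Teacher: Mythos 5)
Your proposal is correct and follows essentially the same route as the paper's proof: non-constancy of $\F$ on the irreducible two-dimensional surface $U$ forces $V(f_1,\dots,f_6,G_1)$ to have dimension at most $1$, after which \cref{lemma:bezout} with $d=1$ and the degrees of the Grant equations ($2\cdot 2\cdot 2\cdot 3\cdot 3\cdot 3 = 216$) give the bound. Your write-up is in fact more explicit than the paper's two-line argument about why $G_1$ cannot vanish identically on $U$, though note that non-constancy of $\F$ alone already suffices for this, without needing $G_1/G_2$ to be a reduced representative.
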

\begin{proof} Since $ U $ has dimension $ 2 $ and $ \F $ is non-constant on $ U $, $ V(f_1,\dots,\f_6, G_1) $ has dimension $ 1 $ in $ \mathbb{A}^8 $. Applying \cref{lemma:bezout}, we obtain
	\begin{equation*}
	|V_{\Fqp}(f_1,\dots,f_6,G_1)| \leq q^n \deg G_1 \prod_{i=1}^6 \deg f_i  \leq 216 q^n \deg \F.
	\end{equation*} 
\end{proof}

\subsection{Linear complexity}
We recall the following result on the linear complexity, see \cite[Lemma~ 6]{langeCertainExponentialSums2005}.
\begin{lemma}\label{lemma:linComp}
	Let $(s_n)$ be a linear recurrent sequence of order $L$ over $\Fq$ defined by a linear recursion
	$$
	s_{n+L}=c_0s_n +\dots + c_{L-1}s_{n+L-1}, \quad n\geq 0.
	$$
	Then for any $T\geq L+1$ and pairwise distinct positive integers $j_1,\dots, j_{T}$, there exist $\lambda_1,\dots, \lambda_T \in \Fq$, not all equal to zero, such that
	$$
	\sum_{i=1}^T\lambda_is_{n+j_i}=0, \quad n\geq 0.
	$$
\end{lemma}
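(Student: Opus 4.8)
The final statement in the excerpt is Lemma 3.8 (`lemma:linComp`), the linear-complexity lemma cited from Lange's work. Let me sketch a proof of that.
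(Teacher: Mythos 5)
Your proposal stops before it begins: the text announces ``Let me sketch a proof of that'' and then no sketch follows. There is no argument to assess --- no identification of the relevant vector space, no dimension count, no induction, nothing. This is a gap in the most literal sense: the entire proof is missing.

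For reference, the paper does not prove this lemma internally either; it cites it as Lemma~6 of Lange and Shparlinski. The argument you would need to supply is short. Let $V$ be the set of all sequences $(t_n)_{n\geq 0}$ over $\Fq$ satisfying the fixed recursion $t_{n+L}=c_0t_n+\dots+c_{L-1}t_{n+L-1}$ for all $n\geq 0$. Then $V$ is an $\Fq$-vector space, and the map sending a sequence in $V$ to its initial segment $(t_0,\dots,t_{L-1})\in\Fq^L$ is injective, since the recursion determines all later terms; hence $\dim V\leq L$. Moreover $V$ is closed under the shift operator $E:(t_n)_{n\geq 0}\mapsto(t_{n+1})_{n\geq 0}$, because the recursion holds for every $n\geq 0$; consequently each shifted sequence $(s_{n+j_i})_{n\geq 0}=E^{j_i}\bigl((s_n)_{n\geq 0}\bigr)$ lies in $V$. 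Since $T\geq L+1>\dim V$, the $T$ sequences $(s_{n+j_1})_{n\geq 0},\dots,(s_{n+j_T})_{n\geq 0}$ are linearly dependent over $\Fq$: there exist $\lambda_1,\dots,\lambda_T\in\Fq$, not all zero, with $\sum_{i=1}^T\lambda_i s_{n+j_i}=0$ for all $n\geq 0$, which is exactly the assertion of the lemma.
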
 

\subsection{Number of torsion elements}
We need the following result on the number of torsion elements in the Jacobian of hyperelliptic curves over finite fields. 
% Let $ J_C $ be the Jacobian of a hyperelliptic curve of genus $ g $. Let $ m \in \Z $. An element $ D $ is an $ m-$torsion element whenever $ mD = \cO $. 
\begin{lemma}\label{lem:torsion}
	%Let $ m $ be an integer such that characteristic of the field $ \Fq $ does not divide $ m $. 
	Let $ m $ be an integer coprime to the characteristic of $ \Fq $. Then, 
	\begin{equation*}
	\# \{D \in J_C(\Fqb) : mD = \cO \} = m^{2g}.
	\end{equation*}
\end{lemma}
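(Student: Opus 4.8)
The plan is to use that $J_C$ is an abelian variety of dimension $g$ over the algebraically closed field $\Fqb$ and to analyze the multiplication-by-$m$ endomorphism $[m]\colon J_C \to J_C$, $D \mapsto mD$. The set to be counted is precisely the group $\ker[m]$ of geometric points, so it suffices to prove that $[m]$ is a separable isogeny of degree $m^{2g}$; the equality $\#\ker[m] = \deg[m] = m^{2g}$ then gives the claim.

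First I would verify separability under the hypothesis $p \nmid m$. Since $[m]$ is a group homomorphism, its differential at the identity $\cO$ is multiplication by $m$ on the tangent space $T_{\cO}J_C$, which has dimension $g$. As $m$ is invertible in $\Fqb$, this linear map is an isomorphism, so $[m]$ is \'etale at $\cO$ and, by translation, \'etale on a dense open set; in particular $[m]$ is separable and dominant. Being a dominant morphism of projective varieties of the same dimension, it has finite fibres and is therefore a finite isogeny, so $\ker[m]$ is finite.

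Next, because $[m]$ is a homomorphism, each of its fibres is a coset of $\ker[m]$, and hence all fibres have the same cardinality. A finite separable morphism is \'etale over a dense open subset, where each fibre has exactly $\deg[m]$ points; consequently every fibre, and in particular $\ker[m] = [m]^{-1}(\cO)$, has $\deg[m]$ points, giving $\#\ker[m] = \deg[m]$. It remains to compute this degree. Fixing a symmetric ample line bundle $L$ on $J_C$, the theorem of the cube yields $[m]^*L \cong L^{\otimes m^2}$, and comparing top self-intersection numbers via the projection formula gives
$$
\deg[m]\cdot (L^{g}) = \big([m]^*L\big)^{g} = (m^2)^{g}(L^{g}) = m^{2g}(L^{g}).
$$
Since $(L^g) > 0$ for an ample bundle, we conclude $\deg[m] = m^{2g}$, and therefore $\#\{D \in J_C(\Fqb) : mD = \cO\} = m^{2g}$.

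The separability check and the kernel-equals-degree statement are standard facts about isogenies of abelian varieties. The one genuinely content-bearing step, and the point I expect to be the main obstacle, is the degree computation $\deg[m] = m^{2g}$: it relies on the theorem of the cube to obtain $[m]^*L \cong L^{\otimes m^2}$ for symmetric $L$, together with intersection theory on the $g$-dimensional variety $J_C$. All of this is classical for abelian varieties and may simply be cited; see \cite{cohen2005handbook} for the hyperelliptic setting.
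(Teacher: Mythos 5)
Your proof is correct, but it takes a genuinely different (more self-contained) route than the paper, which offers no argument at all: the paper's entire ``proof'' is a citation of the standard torsion theorem for abelian varieties (Hindry--Silverman, Theorem~A.7.2.7). What you have written is essentially the classical proof of that cited theorem: $[m]$ on the $g$-dimensional abelian variety $J_C$ has differential equal to multiplication by $m$ on $T_{\cO}J_C$, hence is \'etale at $\cO$ (and everywhere, by translation) when $p \nmid m$; the theorem of the cube applied to a symmetric ample bundle $L$ gives $[m]^*L \cong L^{\otimes m^2}$, so comparing top self-intersection numbers yields $\deg[m] = m^{2g}$; and since fibres of a homomorphism are cosets of $\ker[m]$ and a separable finite morphism is \'etale over a dense open set, every fibre has exactly $\deg[m]$ points. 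All steps are sound; the only detail worth making explicit is the existence of a symmetric ample bundle, obtained by replacing an ample $L_0$ with $L_0 \otimes (-1)^*L_0$. The trade-off is the expected one: the paper's citation keeps things short and defers entirely to the literature, whereas your argument exhibits the mechanism behind the count, but the machinery it invokes (theorem of the cube, intersection theory, \'etale fibre counting) is itself classical material one would cite rather than reprove, so in practice both routes rest on the same body of abelian-variety theory.
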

For a proof, we refer to  \cite[Theorem~A.7.2.7]{hindryDiophantineGeometryIntroduction2000}.

\subsection{Collisions}

We now turn our attention to the collisions which can occur in \eqref{eq:Dm_def}. Let $T_k(Q)$ be the number of $k$-tuples $\mb = (m_0,\dots,m_{k-1}) \in \cR^k $ such that $D_{\mb} = Q$.
We recall the following result from \cite[Theorem 2]{ShparlinskiLange05}, which gives an upper bound for $ T_k(Q) $. This upper bound implies that the elements generated by \eqref{eq:Dm_def} do not take the same value too often and are sufficiently uniformly distributed.

\begin{prop}\label{lem:distinct_points}
	Let $C$ be a hyperelliptic curve of genus $2$ defined over $\Fq$ such that the characteristic polynomial of the Frobenius endomorphism $\chi_C$ is irreducible. Let $D \in J_C(\Fqn)$ of prime order $\ell$. Then for any integers $k$ and $e$ with $1\leq e \leq k$ and $q^{2e} \leq (q^{1/2} -1)^4 q^{-8} \ell $, and for every element $Q \in J_C(\Fqn)$, the bound $T_k(Q)\leq q^{2(k-e)}$ holds.
\end{prop}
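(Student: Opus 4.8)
The plan is to control a single fibre of the map $\mb\mapsto D_{\mb}$ by turning a collision into a short ``Frobenius relation'' and then excluding such relations through the degree of an isogeny. Fix $Q$ and assume the fibre is nonempty. If $\mb,\mb'\in\cR^k$ both satisfy $D_{\mb}=D_{\mb'}=Q$, then subtracting gives $\sum_{j=0}^{k-1}(m_j-m_j')\,\sigma^{j}(D)=\cO$, where each difference $r_j:=m_j-m_j'$ lies in $\{-(q^2-1),\dots,q^2-1\}$, so $|r_j|<q^2$. I would establish the following rigidity statement: if two fibre elements agree on the first $k-e$ coordinates, then they coincide. Granting this, the map sending a fibre element to its first $k-e$ coordinates is injective on the fibre, whence $T_k(Q)\le|\cR|^{\,k-e}=q^{2(k-e)}$, which is the claim.

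To prove the rigidity statement, suppose $m_j=m_j'$ for $j<k-e$, so that $r_j=0$ there and the relation reduces to $\sum_{i=0}^{e-1}r_{k-e+i}\,\sigma^{k-e+i}(D)=\cO$. The hypothesis $q^{2e}\le(q^{1/2}-1)^4q^{-8}\ell$ together with $e\ge1$ forces $\ell$ to exceed $q$, so $\ell\neq p$ and hence $\ell\nmid q^2$; since $\det(\sigma\mid J_C[\ell])\equiv q^{2}\not\equiv0\pmod{\ell}$, the endomorphism $\sigma$ acts invertibly on the $\ell$-torsion $J_C[\ell]$, an $\mathbb{F}_\ell$-vector space of dimension $4$ by \cref{lem:torsion}. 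Applying $\sigma^{-(k-e)}$ I obtain $P(\sigma)D=\cO$ with $P(T)=\sum_{i=0}^{e-1}r_{k-e+i}T^{i}$, so $\deg P<e$ and $\|P\|_\infty<q^{2}$. The goal is to deduce $P=0$, and I would split according to whether $\chi_C\mid P$.

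The main step is the case $\chi_C\nmid P$. Since $\chi_C$ is irreducible it is the minimal polynomial of $\sigma$ and $J_C$ is simple, so $P(\sigma)$ is a nonzero isogeny whose kernel contains the order-$\ell$ point $D$; consequently $\ell\le\#\ker P(\sigma)\le\deg P(\sigma)=\prod_{i=1}^{4}P(\tau_i)$, a positive integer equal to $\prod_{i=1}^{4}|P(\tau_i)|$. On the other hand, using $|\tau_i|=q^{1/2}$ from the Hasse--Weil bound,
\[
\prod_{i=1}^{4}\bigl|P(\tau_i)\bigr|
\;\le\;\Bigl(q^{2}\sum_{j=0}^{e-1}q^{j/2}\Bigr)^{4}
\;<\;q^{8}\,\frac{q^{2e}}{(q^{1/2}-1)^{4}}
\;\le\;\ell ,
\]
where the final inequality is exactly the hypothesis on $\ell$. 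This contradicts $\ell\le\prod_{i=1}^{4}P(\tau_i)$, so the case $\chi_C\nmid P$ cannot occur.

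It remains to treat $\chi_C\mid P$, which can only arise once $e>4=\deg\chi_C$ and is the technical point one must not overlook in order to allow the large values of $e$ needed in \cref{thm:main_result}. Writing $P=\chi_C\,g$ with $g\neq0$ and extracting the largest power of $T$ dividing $g$, say $g=T^{a}\tilde g$ with $\tilde g(0)\neq0$, the coefficient of $T^{a}$ in $P$ equals $\chi_C(0)\,\tilde g(0)=q^{2}\tilde g(0)$, of absolute value at least $q^{2}$; this contradicts $\|P\|_\infty<q^{2}$. Hence $g=0$, so $P=0$, all top differences vanish, and the rigidity statement follows, giving $T_k(Q)\le q^{2(k-e)}$. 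I expect the degree-of-isogeny estimate to be the crux of the argument, since it is where the arithmetic of $\sigma$ and the size constraint on $\ell$ interact, while the divisibility case is the easily-missed detail that keeps the reasoning valid for $e>4$.
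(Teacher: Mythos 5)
Your proof is correct. Note that the paper itself does not prove \cref{lem:distinct_points}: it is quoted verbatim from \cite[Theorem~2]{ShparlinskiLange05}, and your argument is essentially a reconstruction of the proof given there — a collision produces $P(\sigma)D=\cO$ with $\deg P<e$ and coefficients of absolute value less than $q^2$; if $\chi_C\nmid P$ then $P(\sigma)$ is an isogeny and $\ell\le\deg P(\sigma)=\prod_{i=1}^4|P(\tau_i)|<q^8q^{2e}(q^{1/2}-1)^{-4}\le\ell$ is a contradiction; and if $\chi_C\mid P$ then the lowest nonzero coefficient of $P$ would be a nonzero integer multiple of $\chi_C(0)=q^2$, which is too large, so $P=0$. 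Two points you leave implicit but should state: the quotient $g=P/\chi_C$ lies in $\Z[T]$ because $\chi_C$ is monic (this is what makes $|\tilde g(0)|\ge 1$ legitimate), and the detour through invertibility of $\sigma$ on $J_C[\ell]$ can be avoided entirely by projecting fibre elements onto their \emph{last} $k-e$ coordinates rather than the first, so that the difference relation is already supported on $\sigma^0,\dots,\sigma^{e-1}$.
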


The bound of \cref{lem:distinct_points} shows that if $ k $ is small and $q^{2k} \leq (q^{1/2} -1)^4 q^{-8} \ell $, then all the elements $D_{\mb}$ are distinct. We observe that if $ q^{2e} \ll \ell /q^8 $ then $q^{2e} \ll (q^{1/2} -1)^4 q^{-8}\ell $. For larger $k$, choosing $e$ maximal such that $ q^{2e} \ll \ell / q^8 $ yields 
\begin{equation}\label{eq:collision_bound}
T_k(Q) \leq \max \{1,  q^{2k - 2e} \} \ll \max \Big\{ 1, \frac{q^{2k + 8}}{\ell} \Big\} .
\end{equation}

\section{Proof of the main theorem}\label{sec:proof}
Let $ \chi_C(T) = T^4 + s_1T^3 + s_2 T^2 + s_1qT + q^2 $ be the characteristic polynomial of the Frobenius endomorphism for genus $ 2 $. The following result is a crucial step in the proof of the main theorem.  
\begin{lemma}\label{lem:sigma_distinct} 
	If $D \in J_C(\Fqn) $ has prime order $\ell$, and $\ell $ does not divide the constant term of $ \chi_C $, then $\sigma(D) \neq \cO$.
	%such that $\ell$ does not divide the constant term of $\chi_C$, i.e. $\ell \nmid q^2$, then $\sigma(D) \neq \cO$.
\end{lemma}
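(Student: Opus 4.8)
The plan is to argue by contradiction, using the characteristic polynomial $\chi_C$ to convert the hypothesis on $\sigma(D)$ into a divisibility statement that contradicts $\ell \nmid q^2$. Suppose toward a contradiction that $\sigma(D) = \cO$. Since $D$ has order $\ell$ and $\sigma$ is a group endomorphism of $J_C$, the orbit $D, \sigma(D), \sigma^2(D), \dots$ lives in the $\ell$-torsion subgroup $J_C[\ell]$, and the relation $\sigma(D)=\cO$ means $\sigma$ annihilates $D$. The key tool is the Cayley--Hamilton relation for Frobenius: the characteristic polynomial $\chi_C$ satisfies $\chi_C(\sigma) = 0$ as an endomorphism of $J_C$, so in particular $\chi_C(\sigma)(D) = \cO$.

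First I would write out $\chi_C(\sigma)(D) = \cO$ explicitly using \eqref{eq:char_poly_Frob} in the genus $2$ form $\chi_C(T) = T^4 + s_1 T^3 + s_2 T^2 + s_1 q T + q^2$, giving
\begin{equation*}
\sigma^4(D) + s_1 \sigma^3(D) + s_2 \sigma^2(D) + s_1 q\, \sigma(D) + q^2 D = \cO.
\end{equation*}
Under the assumption $\sigma(D) = \cO$, every term involving $\sigma^j(D)$ for $j \geq 1$ vanishes, since $\sigma^j(D) = \sigma^{j-1}(\sigma(D)) = \sigma^{j-1}(\cO) = \cO$. What survives is precisely the constant term, leaving $q^2 D = \cO$ (note that $q^2$ is exactly the constant term of $\chi_C$ in genus $2$, matching the general constant term $q^g$ in \eqref{eq:char_poly_Frob}).

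Next I would extract the divisibility consequence. Since $D$ has order $\ell$, the equation $q^2 D = \cO$ forces $\ell \mid q^2$. This directly contradicts the hypothesis $\ell \nmid q^2$ (equivalently, that $\ell$ does not divide the constant term $q^2$ of $\chi_C$). Hence the assumption $\sigma(D) = \cO$ is untenable, and $\sigma(D) \neq \cO$, completing the argument. I would phrase the conclusion so that it clearly uses the order $\ell$ being prime only insofar as $\ell$ is the exact order of $D$; the essential input is simply $\ell \nmid q^2$.

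The main obstacle to pin down rigorously is the Cayley--Hamilton identity $\chi_C(\sigma)(D) = \cO$ for the action of Frobenius on the full Jacobian (or at least on its $\ell$-torsion), rather than merely on the Tate module or in the numerical sense of \eqref{eq:card_Jac_roots}. I would justify this by recalling that $\chi_C$ is the characteristic polynomial of $\sigma$ acting on the Tate module $T_\ell(J_C)$ (for $\ell \neq p$), and that $J_C[\ell]$ is a faithful module over $\Z/\ell\Z[\sigma]$ on which $\chi_C(\sigma)$ acts as zero; since $\ell \nmid q^2$ guarantees $\ell \neq p$, the relevant torsion is \'etale and the standard theory applies. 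Once this endomorphism identity is in hand, the remainder is the short computation above.
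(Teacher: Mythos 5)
Your proposal is correct and follows essentially the same route as the paper: apply the Cayley--Hamilton relation $\chi_C(\sigma)(D)=\cO$, observe that $\sigma(D)=\cO$ kills every term except the constant one, deduce $q^2D=\cO$, and conclude $\ell \mid q^2$, contradicting the hypothesis. Your extra remarks justifying the identity $\chi_C(\sigma)=0$ via the Tate module (and noting $\ell \neq p$) only make explicit what the paper takes for granted ``by definition of $\chi_C$.''
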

\begin{proof}
	If $ \sigma(D) = \cO $, then by definition of $ \chi_C $, we have that
	\begin{equation*}
	q^2D = -\sigma(D)^4 - s_1\sigma(D)^3 - s_2 \sigma(D)^2 - s_1q\sigma(D)  = \cO.
	\end{equation*}
	Thus, the order $ \ell $ of $ D $ divides $ q^2 $, which is the constant term of $ \chi_C $.
\end{proof} 
%Therefore, if $D_{\mb} \neq D_{\Bf{n}}$, then $\sigma^j(D_{\mb}) \neq \sigma^j(D_{\Bf{n}}), \new{\mb,\Bf{n} \in \cR^k, \mb \neq \Bf{n} }, j \in \Z, j \geq 1$.

\begin{proof}[Proof (\cref{thm:main_result})]
	We fix $ r = \max \{  \left\lfloor \frac{k}{4} \right\rfloor , 1 \} $.
	Let $\mb \in \cR^k$, we can write 
	\begin{equation*}
	\mb = (\Bf{\mu},\Bf{\nu}), \Bf{\mu} \in \cR^{r}, \Bf{\nu}\in \cR^{k-r} .
	\end{equation*}
	Let $N_r$ and $N_{k-r}$ be the number of distinct elements $D_{\Bf{\nu}}, \Bf{\nu} \in \cR^r$ and $\Bf{\nu} \in \cR^{k-r}$ respectively. We can assume that $ \ell \gg q^{\frac{3}{4}n +8} $, since otherwise \eqref{eq:thm_main} holds trivially. Therefore, $ \max \{q^{2r}, q^{2k-2r} \} \ll \ell/q^8  $. Hence, by \eqref{eq:collision_bound}, we obtain
	%\begin{equation*}
	%N_r \geq \frac{\# \cR^{r}}{\max_Q T_r(Q)}  \gg \frac{q^{2r}}{\max \{1, q^{2r+8}/ \ell \} } = \min \Big\{q^{2r},\frac{\ell}{q^8} \Big\} 
	%\end{equation*}
	%and similarly
	\begin{equation}\label{eq:bound_Nkr}
	N_{k-r} \geq \frac{\# \cR^{k-r}}{\max_Q T_{k-r}(Q)}  \gg \frac{q^{2(k-r)}}{\max \{1, q^{2(k-r) + 8}/ \ell \} } = \min \Big\{q^{2(k-r)},\frac{\ell}{q^8} \Big\} .
	\end{equation}
	Let $L$ be the linear complexity of the sequence $(w_{\mb})_{\mb \in \cR^k}$ as defined in \eqref{eq:seq_def}. We can assume that 
	\begin{equation}\label{eq:bound_L_thm}
	L < \min\left\{N_r, \frac{|J_C(\Fqp)|-|\Theta(\Fqp)|}{|\Theta(\Fqp)|+16}\right\}, 
	\end{equation}
	since otherwise the theorem holds trivially.
	
	Since by \eqref{eq:bound_L_thm}, we assume that $ L < N_r $, there exist $ L+1 $  vectors  $\Bf{d_0},\dots,\Bf{d_L}\in \cR^r$ such that $D_{\Bf{d_0}}, \dots , D_{\Bf{d_L}}$ are distinct.
	%By \cref{lem:sigma_distinct}, we obtain that $\sigma^{k-r}(D_{\Bf{d_0}}),\dots,\sigma^{k-r}(D_{\Bf{d_L}}) $ are also distinct.
	
	We fix these vectors and for each $ j = 0, \dots,L $ define the sequence 
	\begin{equation*}
	a_{j}(\Bf{s}) = w_{(\Bf{d_j},\Bf{s})}, \quad \Bf{s} \in \cR^{k-r},
	\end{equation*} 
	%\begin{equation*}
	%\f_{j}(\Bf{s}) = f(D_{(\Bf{d_j}, \Bf{s})}), \quad \Bf{s} \in %\cR^{k-r},
	%\end{equation*}
	where again the elements $ a_{j}(\Bf{s}) $ are arranged in a sequence by using lexicographic ordering for vectors $ \Bf{s} $. The sequences $ (a_{j}(\Bf{s}) )_{\Bf{s} \in \cR^{k-r}} $ are parts of $ (w_{\mb})_{\mb \in \cR^k} $, that is, they are consecutive elements in  $ (w_{\mb})_{\mb \in \cR^k} $, as $ \Bf{s} $ runs through $ \cR^{k-r} $. By \cref{lemma:linComp}, these sequences are linearly dependent, i.e. there exist constants $ c_0, \dots . c_L \in \Fqp $, not all zero, such that 
	%Hence, they satisfy the same linear recurrence relation of order $ L $. Then these sequences are in a vector space of dimension at most $ L $ so they are linearly dependant, i.e. there exist constants $ c_0, \dots . c_L \in \Fqp $, not all zero, such that 
	\begin{equation}\label{eq:seq_LC}
	c_0  w_{(\Bf{d_0}, \Bf{s})}  + \dots + c_L  w_{(\Bf{d_L}, \Bf{s})}  = 0 ,  \quad \Bf{s} \in \cR^{k-r}.  
	\end{equation} 
	%By \eqref{eq:Dm_def}, we can rewrite this as 
	%\begin{equation*}
	%c_0  f(D_{\Bf{d_0}} + \sigma^{r}(D_{\Bf{s}}) ) + \dots + c_L  f(D_{\Bf{d_L}} + \sigma^{r}(D_{\Bf{s}}) )  = 0 ,  \quad \Bf{s} \in \cR^{k-r}. 
	%\end{equation*} 
	Note that for $ \mb = (\Bf{d_j}, \Bf{s}) $, $ D_{\mb} = D_{(\Bf{d_j}, \Bf{s})} = D_{\Bf{d_j}} + \sigma^r(D_{\Bf{s}}) $ by \eqref{eq:Dm_def}.
	
	We would like to avoid collision of elements $  D_{\Bf{d_j}} , j \in \{0,\dots, L\}$ with $  \Theta(\Fqp) $. We claim that there exists an element $ R \in J_C(\Fqn)$ such that
	\begin{align}
	D_{\Bf{d_i}}  + R \notin \Theta(\Fqp), \quad &\text{~for~} 0 \leq i \leq L, \label{eq:R_cond_1}\\
	D_{\Bf{d_L}}  + R \neq -( D_{\Bf{d_j}}  + R),\quad &\text{~for~} 0 \leq j \leq L-1. \label{eq:R_cond_2}
	\end{align} 
	We count the number of elements $ R \in J_C(\Fqn) $ such that $ R $ does not satisfy \eqref{eq:R_cond_1} or \eqref{eq:R_cond_2}. There are at most $ (L+1)|\Theta(\Fqp)| $ choices for $ R $ such that $ 		 D_{\Bf{d_i}}  + R \in \Theta(\Fqp) $ for some $ 0 \leq i \leq L $.
	%Indeed, if \eqref{eq:R_cond_1} was not satisfied, then we have at most $ (L+1)|\Theta(\Fqp)| $ choices for $ R $ such that $ 	\sigma^{k-r}(D_{\Bf{d_i}}) + R \in \Theta(\Fqp) $ for $ 0 \leq i \leq L $. 
	
	Furthermore, if \eqref{eq:R_cond_2} was not satisfied, then we obtain that 
	\begin{equation*}
	-( D_{\Bf{d_L}}  +  D_{\Bf{d_j}} ) = 2R, \text{~for some~} 0 \leq j \leq L-1. 
	\end{equation*}
	By \cref{lem:torsion}, we obtain that there are at most $ 16 $ elements $ R \in J_C(\Fqb) $ such that $ 2R = \cO $.
	%of $ n $ torsion elements of $ J_C(\Fqb) $ is $ n^{2g} $, i.e for $ g=2=n $ there are at most $ 16 $ elements $ R \in J_C(\Fqb) $ such that $ 2R = \cO $. See  \cite[Theorem A.7.2.7]{hindryDiophantineGeometryIntroduction2000}. 
	Therefore, there are at most $ 16 L $ choices for $ R $ such that $2R = -( D_{\Bf{d_L}}  +  D_{\Bf{d_j}} ) $ for some $ j \in \{0,\dots,L-1\} $.
	By \eqref{eq:bound_L_thm}, we know that $$ |J_C(\Fqp)| - (L+1)|\Theta(\Fqp)| - 16L > 0, $$ hence there exists $ R \in J_C(\Fqp) $ such that \eqref{eq:R_cond_1} and \eqref{eq:R_cond_2} are satisfied.
	
	Let $R_i =  D_{\Bf{d_i}}  + R $. 
	Consider the function
	\begin{equation}\label{eq:F_thm}
	\F(Q) = \sum_{i=0}^{L} c_i \f(Q + R_i).
	\end{equation}
	By \cref{prop:F_nonconst} we know that $\F$ is non-constant and has degree at most $  6(L+1) (\deg \f) $.
	
	%We observe that if $ f(D_{\mb}) $ is defined for $ \mb = (\Bf{d_j},\Bf{s}), \Bf{d_j} \in \cR^r,\Bf{s} \in \cR^{k-r} $, then by \eqref{eq:Dm_def}, we can write $ f(D_{\mb}) = f(D_{\Bf{d_j}} + \sigma^{r}(D_{\Bf{s}})) $.
	%Writing $ D_{\mb} = D_{\Bf{d_j}} + \sigma^{r}(D_{\Bf{s}}) $, 
	
	We observe that if $ \F $ has a pole at $ Q $, then $ Q $ must have a form $ Q = \sigma^{r}(D_{\Bf{s}})  - R, \Bf{s} \in \cR^{k-r}, $ with $ Q \in \Theta(\Fqp) $ or $ Q \in \Theta(\Fqp) \pm R_i $ for $ 0 \leq i \leq L $. Hence, defining set $ \cS $ as follows ensures that for $ Q \in \cS $, the sum in \eqref{eq:F_thm} does not contain any poles.
	Define 
	\begin{align*}
	\cS = \{ Q \in J_C(\Fqn) :~ &Q = \sigma^{r}(D_{\Bf{s}})  - R, \Bf{s} \in \cR^{k-r}, \text{ with } Q \notin \Theta(\Fqp) \\* &\text{ and }  Q \pm R_i \notin \Theta(\Fqp), \text{ for } 0 \leq i \leq L \}.
	\end{align*} 
	%The definition of set $ \cS $ ensures that for $ Q \in \cS $, the sum in \eqref{eq:F_thm} does not contain any poles. 
	Hence by \eqref{eq:seq_def} and \eqref{eq:seq_LC}, $ \F(Q) = 0 $ for $ Q \in \cS $. 
	
	Now we give a lower bound for $ |\cS| $. We observe that if $ D $ has prime order $ \ell $, then $ \sigma^j(D)$ also has order $ \ell $, since $ \sigma $ is an endomorphism and hence additive. Combining this with \eqref{eq:Dm_def}, we see that if $ \ell D_{\mb} = \cO $, then either $D_{\mb}= \cO$ or it has order $ \ell $, since $ \ell $ is prime. Hence, by \cref{lem:sigma_distinct}, we obtain that if $D_{\mb} \neq D_{\Bf{n}}$, then $\sigma^j(D_{\mb}) \neq \sigma^j(D_{\Bf{n}}), \mb,\Bf{n} \in \cR^k, j \in \Z, j \geq 1$. Therefore, the number of distinct elements $ \sigma^{r}(D_{\Bf{s}}) - R, \Bf{s} \in \cR^{k-r} $ is $ N_{k-r} $.
	We observe that for $ Q = \sigma^{r}(D_{\Bf{s}})  - R, \Bf{s} \in \cR^{k-r} $,
	\begin{equation}\label{eq:bad_Q_1}
	|\{Q \in J_C(\Fqn) :  Q \in \Theta(\Fqn) \}| \leq |\Theta(\Fqp)|
	\end{equation}
	and
	\begin{equation}\label{eq:bad_Q_2}
	|\{Q \in J_C(\Fqn) : Q \pm R_i \in \Theta(\Fqn) \}| \leq  2 (L+1)|\Theta(\Fqp)|.
	\end{equation}
	%Hence, counting the number of distinct elements $ \sigma^{r}(D_{\Bf{s}})  $ is the same as counting the number of distinct elements $  D_{\Bf{s}} $.There are at most $|\Theta(\Fqp)|$ elements $Q $ with $ Q \in \Theta(\Fqp)$ and similarly for each $i,~ 0\leq i \leq L$, there are $2|\Theta(\Fqp)|$ elements  $Q$ with $ Q \pm R_i \in \Theta(\Fqp)$. 
	Hence, by \eqref{eq:bad_Q_1} and \eqref{eq:bad_Q_2} we obtain, 
	\begin{equation}\label{eq:S_lowerbound}
	|\cS| \geq N_{k-r} - 2 (L+1)|\Theta(\Fqp)| - |\Theta(\Fqp)|.
	\end{equation}
	
	To give an upper bound for $ |\cS| $, 
	%we first observe that by \eqref{eq:bound_L_thm}, \eqref{eq:claim} is satisfied since
	%\begin{equation*}
	%|\cS| \geq N_{k-r} - 2 (L+1)|\Theta(\Fqp)| - |\Theta(\Fqp)| > 2|\Theta(\Fqp)| + 2(q^n +1).
	%\end{equation*}  
	we use \cref{lem:Zeros_F_deg} and \eqref{eq:deg_F} to obtain
	\begin{equation}\label{eq:S_upperbound}
	|\cS| \leq 216q^n \deg \F \leq 1296(L+1)q^n \deg \f.
	\end{equation}
	Combining equations \eqref{eq:S_lowerbound} and \eqref{eq:S_upperbound} gives us
	\begin{equation}\label{eq:estimate_L}
	L \geq \frac{N_{k-r} - 3|\Theta(\Fqp)| - 1296 q^n \deg \f}{1296 q^n \deg \f + 2|\Theta(\Fqp)|}.
	\end{equation}
	By \eqref{eq:number_of_points}, we can estimate the size of $ |\Theta(\Fqn)| $.
	Substituting the lower bound on $ N_{k-r} $ as given in \eqref{eq:bound_Nkr} into \eqref{eq:estimate_L}, we obtain 
	\begin{equation*}
	L(w_{\mb}) \gg   \frac{\min\{ q^{3k/2},\ell/q^8 \}}{ q^n \deg \f } .
	\end{equation*} 
	
\end{proof}	

\section*{Acknowledgements}
	The author was supported by the Austrian Science Fund FWF Project P31762. I am very grateful to L{\'a}szl{\'o} M{\'e}rai for valuable discussions and comments.

\appendix
\section{}
\begingroup
\allowdisplaybreaks
Let $C$ be the hyperelliptic curve defined by \eqref{def:hec} with
$$
f(X) = X^5 + b_1X^4 + b_2X^3 + b_3X^2 + b_4X + b_5 \in \Fq[X],
$$
for the finite field $\Fq$ with characteristic $p \geq 3 $.
\subsection{Defining equations of the Jacobian}
\label{sec:def_eq_J}
Let  
$$
S= \Fq[\X_0, \X_{11},\X_{12},\X_{22},\X_{111},\X_{112},\X_{122},\X_{222},\X] $$ 
be a polynomial ring over field $\Fq$, with characteristic $p \geq 3 $. Following \cite{grant1990formal}, in particular Theorem 2.5, Theorem 2.11 and Corollary 2.15, we define $f_i $ as follows: 
\begin{align*}
f_0 = &\X^2 + \X_{11}^2\X_{12} + b_1\X_{11}^2\X_{22} +   b_2\X_{11}^2\X_{12}\X_{22} - b_3\X_{11}\X_{22}^2 + b_4\X_{12}\X_{22}^2 \\
&-b_5\X_{22}^3 + 2b_1\X\X_{11} -2b_2\X\X_{12} + 2b_3\X\X_{22}+ (b_3-b_1b_2)\X_{11}\X_{12} \\
&+(b_2^2-b_1b_3)\X_{11}\X_{22} + (b_1b_4-b_2b_3-b_5)\X_{12}\X_{22}-b_1b_5\X_{22}^2 \\
& +2(b_1b_3 -b_2^2)\X + (b_1b_4-b_5)\X_{11} +b_2(b_2^2 - b_1b_3 )\X_{12} \\
&(b_3b_4 - b_2b_5)\X_{22} + b_1b_3b_4 - b_2^2b_4 - b_3b_5, \\
f_1 =&2\X - \X_{11}\X_{22} + \X_{12}^2 - b_2\X_{12} + b_4, \\
f_2 =&\X_{112} -\X_{222}\X_{12} + \X_{122}\X_{22}, \\
f_3 =&\X_{111} + \X_{222}\X_{11} + \X_{122}\X_{12} - 2\X_{112}\X_{22} - 2b_1\X_{112} + b_2\X_{122}, \\
f_4 =&\X_{122}^2 - \X_{11}\X_{22}^2 + 2\X\X_{22} + \X_{11}\X_{12} - b_1\X_{11}\X_{22} - b_2\X_{12}\X_{22} \\
& +2b_1\X - b_1b_2\X_{12} + b_4\X_{22} + b_1b_4 - b_5 , \\
f_5 =& \X_{222}^2 - \X_{22}^3 - \X_{12}\X_{22} - b_1\X_{22}^2 - \X_{11} - b_2\X_{22} - b_3 , \\
f_6 =&\X_{122}\X_{222} - \X_{12}\X_{22}^2 + \X -b_2\X_{12} - b_1\X_{12}\X_{22}, \\
f_7 =&\X_{111}^2 - \X_{11}^3 - b_3\X_{11}^2 -b_4\X_{11}\X_{12} + 3b_5\X_{11}\X_{22} + 2b_5\X \\
&+(4b_1b_5 - b_2b_4)\X_{11} - 3b_2b_5\X_{12} + (4b_3b_5-b_4^2)\X_{22} \\
&4b_1b_3b_5 + b_4b_5 - b_1b_4^2 - b_2^2b_5, \\
f_8 =& -\X_{111}\X_{112} + b_1\X_{111}\X_{122} -b_2\X_{112}\X_{122} + b_3\X_{112}\X_{222} \\
&-b_4\X_{122}\X_{222} + b_5\X_{222}^2 -\X^2 -b_1 \X \X_{11} + b_2 \X \X_{12} - b_3 \X \X_{22} \\
&-b_3\X_{11}\X_{12} + b_1b_3\X_{11}\X_{22} - (b_5 + b_1b_4)\X_{12}\X_{22} + 2b_1b_5\X_{22}^2 \\
& -2(b_1b_3 + b_4)\X  + (2b_2b_4 + b_1b_2b_3 +b_1b_5 - b_3^2 -b_1^2b_4)\X_{12} \\
&- 2b_5\X_{11} + 2b_5(b_1^2 - b_2)\X_{22} + b_1b_2b_5 - b_1b_3b_4 - 2b_3b_5 , \\
f_9 =& \X_{122}^2 - \X_{111}\X_{122} + \X_{11}\X - b_3\X_{11}\X_{22} + 2b_4\X_{12}\X_{22} -3b_5\X_{22}^2 \\
&+2b_3\X + (b_1b_4 - b_2b_3 - b_5)\X_{12} - 2b_1 b_5 \X_{22} + b_3b_4 - b_2b_5, \\
f_{10} =&\X_{111}\X_{222} - \X_{112}\X_{122} -2\X\X_{12} + \X_{11}^2 - 2b_1 \X_{11}\X_{12} \\
&+3b_2 \X_{11}\X_{22} - 2b_3 \X_{12}\X_{22} + b_4\X_{22}^2 -5b_2\X +b_3\X_{11} \\
&+ (3b_2^2 - 2b_1b_3)\X_{12} + (b_1b_4 - b_5)\X_{22} - 2b_2b_4, \\
f_{11}=&\X_{122}^2 - \X_{112}\X_{222} + \X_{22}\X + 2\X_{11}\X_{12} - b_1\X_{11}\X_{22} + 2b_1\X \\
&+(b_3-b_1b_2)\X_{12} + b_1b_4 - b_5, \\
f_{12} =&\X_{111}\X_{12} -\X_{112}\X_{11} - b_4\X_{122} + 2b_5\X_{222}, \\
f_{13} =& 2\X_{122}\X_{11} - \X_{112}\X_{12} -\X_{111}\X_{22} - b_2\X_{112} + 2b_3\X_{122} - b_4\X_{222}.
\end{align*}
One can show that $f_0\in \langle f_4,f_5,f_6\rangle$
and the vanishing locus of these polynomials homogenized with respect to the variable $\X_0$ forms a set of defining equations for the Jacobian $J_C$, i.e 
$$
J_C = V(f_1^h,\dots,f_{13}^h) = \{ z \in \mathbb{P}^8(\Fqb) : f_i^h(z) = 0, 1 \leq i \leq 13 \}
$$

\subsection{Addition formulas}
\label{sec:addition_formulas}
For $D = (x_1,y_1) + (x_2,y_2) - 2\cO \in J_C(\Fq)\setminus \Theta(\Fq)$, \eqref{eq:iota} gives us $ \iota(D) $, where the components $z_{jk}, z_{jkl}$ of $\iota(D)$ can be expressed as rational functions in the coordinates $(x_1,y_1)$ and $(x_2,y_2)$. For the sake of completeness, we collect the addition formulas as given in \cite[Theorem 3.3]{grant1990formal} and as explicitly computed in \cite[Appendix A.3]{anupindiLinearComplexitySequences2021}.

\begin{align*}
z_{ij}(Q+R) =& -z_{ij}(Q)-z_{ij}(R) + \frac{1}{4}\left( \frac{q_{i}(Q,R)}{q(Q,R)} \right) \left( \frac{q_{j}(Q,R)}{q(Q,R)} \right) \\
&-\frac{1}{4} \left( \frac{q_{ij}(Q,R)}{q(Q,R)} \right)\\
z_{111}(Q+R)= &-\frac{1}{2}z_{111}(Q)-\frac{1}{2}z_{111}(R) + \frac{3}{16}\frac{q_{1}(Q,R)q_{11}(Q,R)}{q(Q,R)^2}-\frac{1}{16}\frac{q_{111}(Q,R)}{q(Q,R)} \\
&- \frac{1}{8} \left( \frac{q_{1}(Q,R)}{q(Q,R)} \right) ^3 
+ \frac{3}{4}(z_{11}(Q)+z_{11}(R))\frac{q_{1}(Q,R)}{q(Q,R)}, \\
z_{112}(Q+R)= &-\frac{1}{2}z_{112}(Q)-\frac{1}{2}z_{112}(R) + \frac{1}{16}\frac{q_{2}(Q,R)q_{11}(Q,R)}{q(Q,R)^2}\\ &+\frac{1}{8}\frac{q_{1}(Q,R)q_{12}(Q,R)}{q(Q,R)^2} -\frac{1}{16}\frac{q_{112}(Q,R)}{q(Q,R)}
- \frac{1}{8}\frac{q_{2}(Q,R)(q_{1}(Q,R))^2}{q(Q,R)^3} \\  
&+ \frac{3}{8}(z_{11}(Q)+z_{11}(R))\frac{q_{2}(Q,R)}{q(Q,R)} + \frac{3}{8}(z_{12}(Q)+z_{12}(R))\frac{q_{1}(Q,R)}{q(Q,R)} \\
z_{122}(Q+R)= &-\frac{1}{2}z_{122}(Q)-\frac{1}{2}z_{122}(R) + \frac{1}{16}\frac{q_{1}(Q,R)q_{22}(Q,R)}{q(Q,R)^2}\\ &+\frac{1}{8}\frac{q_{2}(Q,R)q_{12}(Q,R)}{q(Q,R)^2} -\frac{1}{16}\frac{q_{122}(Q,R)}{q(Q,R)} \\
&- \frac{1}{8}\frac{q_{1}(Q,R)(q_{2}(Q,R))^2}{q(Q,R)^3}  
+ \frac{3}{4}(z_{12}(Q)+z_{12}(R))\frac{q_{2}(Q,R)}{q(Q,R)} \\
z_{222}(Q+R)= &-\frac{1}{2}z_{222}(Q)-\frac{1}{2}z_{222}(R) + \frac{3}{16}\frac{q_{2}(Q,R)q_{22}(Q,R)}{q(Q,R)^2}-\frac{1}{16}\frac{q_{222}(Q,R)}{q(Q,R)} \\
&- \frac{1}{8} \left( \frac{q_{2}(Q,R)}{q(Q,R)} \right) ^3 
+ \frac{3}{4}(z_{22}(Q)+z_{22}(R))\frac{q_{2}(Q,R)}{q(Q,R)} \\
z(Q+R) =& \frac{1}{2}(z_{11}(Q+R)z_{22}(Q+R) - z_{11}^2(Q+R) + b_2z_{12}(Q+R) - b_4 )
\end{align*}

To evaluate the addition formulas above, we need the following rational functions:
\newcommand{\p}[2]{z_{#1#2}}
\newcommand{\pp}[3]{2z_{#1#2#3}}
\newcommand{\po}{(2z - b_2 z_{12} + b_4)}

\begin{align*}
q(Q,R)= &z_{11}(Q)-z_{11}(R)+z_{12}(Q)z_{22}(R)-z_{12}(R)z_{22}(Q), \\
q_1(Q,R) = &\pp111(Q) - \pp111(R) + \pp112(Q)\p22(R) - \pp112(R)\p22(Q) \\
& +\pp122(R)\p12(Q) - \pp122(Q)\p12(R), \\ 
q_2(Q,R) =& \pp112(Q) - \pp112(R) + \pp122(Q)\p22(R) - \pp122(R)\p22(Q) \\
& +\pp222(R)\p12(Q) - \pp222(Q)\p12(R), \\
q_{11}(Q,R)=& 4b_3 q(Q,R) + 4b_4(\p12(Q)-\p12(R)) + 4(\po(Q)\p12(R))\\
&-4(\po(R)\p12(Q))-8b_5(\p22(Q) - \p22(R)) \\ &+2(\pp112(Q)\pp122(R) - \pp112(R)\pp122(Q)),\\
q_{12}(Q,R) =& 4b_3(\p12(Q)-\p12(R)) + 2b_2(\p12(Q)\p22(R)) \\ &-2b_2(\p12(R)\p22(Q)) - 4(\p11(Q)\p12(R)-\p11(R)\p12(Q))\\
& +2(\po(Q)\p22(R)-\po(R)\p22(Q)) \\
& - 2b_4(\p22(Q) - \p22(R)) + \pp222(R)\pp112(Q)-\pp222(Q)\pp112(R), \\
q_{22}(Q,R) =& 8b_1(\p12(Q)\p22(R)-\p12(R)\p22(Q)) + 4b_2\p12(Q)\\
& -4b_2\p12(R) -8(\p11(Q)\p22(R)-\p11(R)\p22(Q)) \\
& - 4(\po(Q)-\po(R)) \\
& + 2(\pp122(Q)\pp222(R) -\pp122(R)\pp222(Q)),\\
q_{111}(Q,R) =& 4b_3q_1(Q,R) \\
& + 4(\pp111(Q)\p22(Q)\p12(R)-\pp111(R)\p22(R)\p12(Q)) \\
& + \pp122(R)(2\p12(Q)(6\p11(Q) - 2\p11(R)+4b_3)-4b_4\p22(Q))\\
& - \pp122(Q)(2\p12(R)(6\p11(R) - 2\p11(Q)+4b_3)-4b_4\p22(R))\\
& + \pp112(Q)(\p12(R)(12\p12(R) - 8\p12(Q)+4b_2)+4b_4)\\
& - \pp112(R)(\p12(Q)(12\p12(Q) - 8\p12(R)+4b_2)+4b_4)\\
q_{112}(Q,R) = &~\pp222(Q)
\left(4 \p11(Q) \p12(R) - 4 \p12(R) b_{3} - 8 b_{5}
\right) \\ 
&+\pp112(Q)
\left(- 4 \p11(R) + 4 \p12(R) \p22(Q) + \p12(R) \left(12 \p22(R) + 8 b_{1}\right)
\right) \\ 
&+\pp112(R)
\left(4 \p11(Q) + \p12(Q) \left(- 12 \p22(Q) - 4 \p22(R) - 8 b_{1}\right) - 4 b_{3}
\right) \\ 
&+\pp122(Q)
(- 8 \p11(R) \p22(R) - 8 \p12(Q) \p12(R) - 4 \p12(R)^{2}  \\
&+ 4 \p22(R) b_{3} + 4 b_{4} - 4 \p12(R) b_{2} ) \\ 
&+\pp122(R)
\left(8 \p11(Q) \p22(Q) + 4 \p12(Q)^{2} + \p12(Q) \left(8 \p12(R) + 4 b_{2}\right) \right.\\
&- \left. 4 \p22(Q) b_{3} - 4 b_{4} \right) + \pp112(Q)4 b_{3}\\ 
&+\pp222(R)
\left(\p12(Q) \left(- 4 \p11(R) + 4 b_{3}\right) + 8 b_{5}
\right) \\
q_{122}(Q,R) = &~\pp112(R)
\left(- 6 \p22(Q)^{2} + \p22(Q) \left(- 2 \p22(R) - 4 b_{1}\right) - 2 b_{2}
\right) \\ 
&+\pp122(R)
\left(- 4 \p11(Q) + \p22(Q) \left(4 \p12(R) - 2 b_{2}\right) - 4 b_{3}
\right) \\ 
&+\pp222(Q)
\left(2 \p11(Q) \p22(R) - 4 \p11(R) \p22(R) - 2 \p12(R)^{2}   
\right) \\ 
&+\pp112(Q)
\left(2 \p22(Q) \p22(R) + 6 \p22(R)^{2} + 4 \p22(R) b_{1} + 2 b_{2}
\right) \\ 
&+\pp222(R)
\left(4 \p11(Q) \p22(Q) - 2 \p11(R) \p22(Q) + 2 \p12(Q)^{2} 
\right) \\ 
&+\pp122(Q)
\left(4 \p11(R) - 4 \p12(Q) \p22(R) + 2 \p22(R) b_{2} + 4 b_{3}
\right) \\
&-\pp222(Q) (2 b_{4} + 4 \p12(R) b_{2}) \\
&+ \pp222(R) (2 b_{4} + 4 \p12(Q) b_{2}) \\
q_{222}(Q,R) = &~\pp222(R)
\left(- 12 \p11(Q) + 4 \p11(R) + \p12(Q) \left(12 \p22(Q) + 16 b_{1}\right)
\right) \\ 
&+\pp122(R)
\left(- 8 \p12(Q) - 8 \p12(R) - 12 \p22(Q)^{2} - 16 \p22(Q) b_{1} - 8 b_{2}
\right) \\ 
&+\pp112(Q)
\left(- 4 \p22(Q) - 8 \p22(R)
\right) \\ 
&+\pp222(Q)
\left(- 4 \p11(Q) + 12 \p11(R) + \p12(R) \left(- 12 \p22(R) - 16 b_{1}\right)
\right) \\ 
&+\pp112(R)
\left(8 \p22(Q) + 4 \p22(R)
\right) \\ 
&+\pp122(Q)
\left(8 \p12(Q) + 8 \p12(R) + 12 \p22(R)^{2} + 16 \p22(R) b_{1} + 8 b_{2}
\right)
\end{align*}
\endgroup

\bibliographystyle{amsplain}
\bibliography{Bibliography}
\end{document}